\pdfoutput=1
\documentclass[reqno, english, 12pt, final]{amsart}
\usepackage{xparse}


\NewDocumentCommand{\tens}{t_}
 {%
  \IfBooleanTF{#1}
   {\tensop}
   {\otimes}%
 }
\NewDocumentCommand{\tensop}{m}
 {%
  \mathbin{\mathop{\otimes}\displaylimits_{#1}}%
 }

%
%
%
\def\vint_#1{\mathchoice%
          {\mathop{\kern 0.2em\vrule width 0.6em height 0.69678ex depth -0.58065ex
                  \kern -0.8em \intop}\nolimits_{\kern -0.4em#1}}%
          {\mathop{\kern 0.1em\vrule width 0.5em height 0.69678ex depth -0.60387ex
                  \kern -0.6em \intop}\nolimits_{#1}}%
          {\mathop{\kern 0.1em\vrule width 0.5em height 0.69678ex
              depth -0.60387ex
                  \kern -0.6em \intop}\nolimits_{#1}}%
          {\mathop{\kern 0.1em\vrule width 0.5em height 0.69678ex depth -0.60387ex
                  \kern -0.6em \intop}\nolimits_{#1}}}
\def\vintslides_#1{\mathchoice%
          {\mathop{\kern 0.1em\vrule width 0.5em height 0.697ex depth -0.581ex
                  \kern -0.6em \intop}\nolimits_{\kern -0.4em#1}}%
          {\mathop{\kern 0.1em\vrule width 0.3em height 0.697ex depth -0.604ex
                  \kern -0.4em \intop}\nolimits_{#1}}%
          {\mathop{\kern 0.1em\vrule width 0.3em height 0.697ex depth -0.604ex
                  \kern -0.4em \intop}\nolimits_{#1}}%
          {\mathop{\kern 0.1em\vrule width 0.3em height 0.697ex depth -0.604ex
                  \kern -0.4em \intop}\nolimits_{#1}}}

\usepackage{a4wide}
\usepackage{mathtools}
\usepackage{amsthm}
\usepackage{amssymb}
\usepackage{hyperref}
\usepackage{color}
\usepackage[obeyFinal]{todonotes}
\mathtoolsset{showonlyrefs}
\usepackage{enumerate}
\usepackage[obeyFinal]{todonotes}

\usepackage{csquotes}

\newcommand{\R}{\mathbb{R}}
\newcommand{\N}{\mathbb{N}}

\renewcommand{\P}{\mathcal{P}}
\newcommand{\B}{\mathcal{B}}
\newcommand{\og}{\mathrm{OptGeo}}
\newcommand{\opt}[2]{\mathrm{Opt}(#1,#2)}
\newcommand{\id}{\mathrm{id}}
\newcommand{\m}{\mathfrak{m}}
\newcommand{\geo}{\mathrm{Geo}}
\newcommand{\Tan}{\mathrm{Tan}}
\newcommand{\spt}{\mathrm{spt}\,}
\newcommand{\restr}{\mathrm{restr}}
\renewcommand{\d}{\,\mathrm{d}}
\renewcommand{\liminf}{\varliminf}
\newcommand{\im}{\mathrm{Im}\,}

\newtheorem*{theorem*}{Theorem}
\newtheorem{theorem}{Theorem}[section]
\newtheorem{lemma}[theorem]{Lemma}
\newtheorem{proposition}[theorem]{Proposition}
\newtheorem{corollary}[theorem]{Corollary}
\theoremstyle{definition}
\newtheorem{definition}[theorem]{Definition}
\newtheorem{example}[theorem]{Example}
\newtheorem{question}{Question}

\theoremstyle{remark}
\newtheorem{remark}[theorem]{Remark}

\begin{document}
\title[On one-dimensionality of metric measure spaces]{On one-dimensionality of metric measure spaces}
\author{Timo Schultz}
\address{University of Jyvaskyla
\\ Department of Mathematics and Statistics
\\ P.O.Box 35
\\ FI-40014 University of Jyvaskyla}
\email{timo.m.schultz@jyu.fi}
\subjclass[2000]{Primary 53C23. }
\keywords{Optimal transport, Ricci curvature, metric measure spaces, Gromov--Hausdorff tangents}
\date{\today}

\begin{abstract} In this paper, we prove that a metric measure space which has at least one open set isometric to an interval, and for which the (possibly non-unique) optimal transport map exists from any absolutely continuous measure to an arbitrary measure, is a one-dimensional manifold (possibly with boundary). As an immediate corollary we obtain that if a metric measure space is a very strict $CD(K,N)$ -space or an essentially non-branching $MCP(K,N)$-space with some open set isometric to an interval, then it is a one-dimensional manifold. We also obtain the same conclusion for a metric measure space which has a point in which the Gromov-Hausdorff tangent is unique and isometric to the real line, and for which the optimal transport maps not only exist but are unique. Again, we obtain an analogous corollary in the setting of essentially non-branching $MCP(K,N)$-spaces.
\end{abstract}
\maketitle
\section{Introduction}
The strong interplay between optimal mass transportation and (metric) geometry has been acknowledged in the last few decades leading to a great number of applications for example in the study of geometric and analytic inequalities, in describing and defining curvature bounds, and in the regularity theory of partial differential equations. The optimal transport theory is useful both in generalising classical results from the theory of smooth manifolds to possibly singular spaces, and in obtaining new results even in the smooth setting. 

In the present paper we will use tools from optimal transport theory to obtain global topological/geometric information about a metric (measure) space from information near a single point in the space. More precisely, we will use the existence -- and in some cases uniqueness -- of an optimal transport map together with one-dimensionality at (Theorem \ref{micro1dgen} and Corollary \ref{microcor}), or near (Theorem \ref{mild1d}, Corollary \ref{cor1d} and Theorem \ref{MCPmild1d}), a point in the space to prove that the space in question is a one-dimensional manifold. Here, by one-dimensionality \emph{at} a point, we mean that the Gromov-Hausdorff tangent at that point is unique and isometric to the real line, and by one-dimensionality \emph{near} a point, we mean that the point has an open neighbourhood isometric to an open interval. 

Such a result was first proven in the setting of Ricci limit spaces in \cite{Honda} by Honda and generalised to the setting of $RCD^*(K,N)$-spaces in \cite{Kitabeppu-Lakzian} by Kitabeppu and Lakzian. In both papers, it is proven that the underlying space satisfying the synthetic Ricci curvature lower bound in question, is a one-dimensional manifold if it is one-dimensional at a single point. These two papers share a common (and natural) viewpoint coming from the structure theory: in both settings one could write the metric measure space up to a zero measure set as a union of sets $\mathcal{R}_k$ in which one has the existence and uniqueness of tangents isomorphic to $k$-dimensional Euclidean space \cite{Cheeger-Colding1997, Mondino-Naber}. In our setting such a decomposition of the space cannot be true in general, not least because of the allowance of Finslerian (type) structures. Note that it is still meaningful to ask what can be concluded from the existence of a one-dimensional part even if we would impose Finslerian type behaviour, since a priori the dimension of the space needs not be constant.

The study of the present paper was partially motivated by the results concerning the existence of optimal transport maps on the spaces having synthetic Ricci curvature bounded from below. A notion of Ricci curvature lower bound for (possibly singular) metric measure spaces -- the so-called $CD(K,N)$-condition (\emph{curvature dimension condition}) -- was introduced in the seminal works of Sturm \cite{Sturm,Sturm2} and of Lott and Villani \cite{Lott-Villani}. The existence and uniqueness of the optimal map in the setting of spaces with synthetic Ricci curvature lower bounds was first proven by Gigli in \cite{Gigli} for non-branching $CD(K,N)$-spaces, and then generalised to strong $CD(K,N)$-spaces by Rajala and Sturm in \cite{Rajala-Sturm} and by Gigli, Rajala and Sturm in \cite{Gigli-Rajala-Sturm}. In their paper, Rajala and Sturm introduced the notion called \emph{essential non-branchingness}, which turned out to be a useful generalisation of the non-branching assumption on metric measure spaces. In \cite{Cavalletti-Mondino}, Cavalletti and Mondino proved the existence and uniqueness of optimal transport maps in $MCP(K,N)$-spaces if one assumes that the underlying metric measure space is essentially non-branching. Then in \cite{Kell}, Kell generalised the result to spaces satisfying even weaker version of curvature lower bound, namely to the setting of \emph{qualitatively non-degenerate} spaces (studied by Cavalletti and Huesmann in the non-branching case in \cite{Cavalletti-Huesmann}) -- still under the essential non-branching assumption. 

Heuristically, the non-branching assumption prevents the geodesics of an optimal plan to intersect at intermediate times, while the curvature lower bound assumption forces them to intersect when the plan is assumed not to be induced by a map, hence the existence and uniqueness of optimal maps is obtained by combining these two. Therefore, while the uniqueness of the optimal map is lost if there exists an essential amount of branching geodesics,  one might still pursue the existence of such a map. This approach was taken in \cite{Schultz2018} (and continued in \cite{Schultz2019}), where the author proved the existence of optimal transport maps in the setting of so-called very strict $CD(K,N)$-spaces. We remark that while in general (branching) $MCP(K,N)$-space the existence of an optimal transport map might fail by the example in \cite{Ketterer-Rajala}, it is still not known whether optimal maps exist in general $CD(K,N)$-spaces. 

\subsection*{Acknowledgements} The author acknowledges the support by the Academy of Finland, project \#314789, and thanks the anonymous referee for carefully reading the paper.

\section{Preliminaries}
For the purposes of this paper, we will always assume that $(X,d,\m)$ is a metric measure space which is a complete, locally compact and separable length space $(X,d)$ equipped with a locally finite measure $\m$. We will also assume that $\spt\m=X$. The space of (constant speed, length minimising) geodesics parametrised by $[0,1]$ is denoted by $\geo(X)$, and it is equipped with the supremum distance.

\subsection{Optimal mass transportation}In this section, we introduce the basic notions of optimal transport theory which set the basis for the paper. In addition, we establish in Proposition \ref{ylim} a subtle detail about the existence and uniqueness of optimal transport maps in the case of non-geodesic spaces.

In the main results of the paper we are assuming the existence of an optimal transport map for the Monge--Kantorovich problem with quadratic cost. The reason for such a choice for the cost function lies in the connection between Wasserstein geodesics and optimal dynamical transport plans. Note that one could obtain similar results by considering cost functions of the form $d^p$ for $p\in(1,\infty)$ different from $2$, since the representation of Wasserstein geodesics by measures on the space of geodesics, and the corresponding existence results of transport maps remain true in this case.

The quadratic Monge--Kantorovich problem reads as follows. Let $\mu_0,\mu_1\in\P(X)$ be Borel probability measures on $X$. Consider the minimisation problem
\[W_2^2(\mu_0,\mu_1)\coloneqq\inf \int d^2(x,y)\d\sigma(x,y),\]
where the infimum is taken over all \emph{transport plans} $\sigma$, that is, over all Borel probability measures $\sigma\in\P(X\times X)$ with $\mu_0$ and $\mu_1$ as marginals ($\mathrm{P}^1_\#\sigma=\mu_0, \mathrm{P}^2_\#\sigma=\mu_1$). It is a standard fact in optimal transport theory that in the setting of complete and separable metric spaces, the above infimum is in fact a minimum. We will call a minimiser of the problem an \emph{optimal (transport) plan}, and denote the set of all optimal plans by $\opt{\mu_0}{\mu_1}$. 

The optimality of a plan can be characterised by the so-called $c$-cyclical monotonicity in the following way. Let $\sigma$ be a transport plan between measures $\mu_0$ and $\mu_1$ for which $W_2^2(\mu_0,\mu_1)<\infty$. Then $\sigma$ is optimal if and only if it is concentrated on a $c$-cyclically monotone set, that is, if there exists a set $\Gamma\subset{X\times X}$ so that $\sigma(\Gamma)=1$, and 
\[\sum_i d^2(x_i,y_i)\le \sum_i d^2(x_i,y_{\tau(i)})\]
for any finite set $\{(x_i,y_i)\}_i\subset \Gamma$ and for any permutation $\tau$.

The function $W_2(\cdot,\cdot)$ defines a metric on the subset $\P_2(X)\subset\P(X)$ of probability measures with finite second moment, that is $\mu\in\P(X)$ with $\int d^2(x,x_0)\d\mu(x)<\infty$ for some $x_0\in X$. The distance $W_2$ is the so-called \emph{Wasserstein distance} (or more precisely the 2-Wasserstein distance). Since $X$ is a complete,  separable and -- by Hopf--Rinow theorem -- geodesic space, so is the Wasserstein space $(\P_2(X),W_2)$. Moreover, a curve $(\mu_t)\subset \P_2(X)$ is a geodesic if, and only if, there exists a measure $\pi\in \P(\geo(X))$ so that $ \mu_t= (e_t)_\#\pi$ and $(e_0,e_1)_\#\pi\in\opt{\mu_0}{\mu_1}$ \cite{Lisini}. Here $e_t\colon \geo(X)\to \R$ is the evaluation map $\gamma\mapsto \gamma_t$. Such a $\pi$ is called an \emph{optimal (geodesic) plan}, and the set of all optimal geodesic plans is denoted by $\og(\mu_0,\mu_1)$.

We say that the optimal plan $\sigma\in\opt{\mu_0}{\mu_1}$ is induced by a map if there exists a Borel map $T\colon X\to X\times X$ such that $\sigma=T_\#\mu_0$ and $\mathrm{P}^1\circ T=\id$. Analogously, $\pi\in\og(\mu_0,\mu_1)$ is induced by a map $T\colon X\to \geo(X)$, if $\pi=T_\#\mu_0$ and $e_0\circ T=\id$.

In the setting of geodesic spaces, one can always lift the optimal plan $\sigma\in\opt{\mu_0}{\mu_1}$ to an optimal geodesic plan $\pi\in\og(\mu_0,\mu_1)$ by making a measurable selection of geodesics for each pair of points in $X$. Therefore, the question of existence of optimal maps in the level of plans in $\opt{\mu_0}{\mu_1}$ and of geodesic plans in $\og(\mu_0,\mu_1)$ are equivalent. Furthermore, if the optimal plan $\pi\in\og(\mu_0,\mu_1)$ is unique, so is $\sigma\in\opt{\mu_0}{\mu_1}$. Since the framework for this paper is that of complete and locally compact metric spaces, being a length space is equivalent to being geodesic. However, sometimes it is more natural to drop the assumption of local compactness and still impose conditions implying the length structure for the space (this is the case for example in the $CD(K,\infty)$-setting). We remark the following connection between existence of optimal maps for transport plans and for geodesic transport plans in the above-mentioned non-geodesic case, which may be of independent interest.

\begin{proposition}\label{ylim}
Let $(X,d,\m)$ be a metric measure space (possibly non-geodesic and non-locally-compact). Assume that for all $\mu_0,\mu_1\in\P_2^{ac}(X)$ the set $\og(\mu_0,\mu_1)$ is non-empty, and that each optimal dynamical plan $\pi\in\og(\mu_0,\mu_1)$ is induced by a map. Then, for any $\mu_0, \mu_1\in\P_2^{ac}(X)$, every optimal plan $\sigma\in\opt{\mu_0}{\mu_1}$ is induced by a map, granting also the uniqueness of the optimal plan.

In particular, every optimal plan $\sigma\in \opt{\mu_0}{\mu_1}$, $\mu_0,\mu_1\in\P_2^{ac}(X)$, can be lifted to a unique dynamical plan $\pi\in\og(\mu_0,\mu_1)$ for which $\sigma=(e_0,e_1)_\#\pi$.
\end{proposition}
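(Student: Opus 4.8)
The plan is to reduce the whole statement to the uniqueness of the optimal \emph{geodesic} plan, which the map hypothesis yields almost for free, and then to transfer that information to the static problem. First I would prove that $\og(\mu_0,\mu_1)$ is a singleton. This set is convex, so if $\pi_0,\pi_1\in\og(\mu_0,\mu_1)$ then $\pi:=\tfrac12(\pi_0+\pi_1)$ again lies in $\og(\mu_0,\mu_1)$ and, by assumption, is induced by a map $T\colon X\to\geo(X)$. Disintegrating along $e_0$ with respect to $\mu_0$, the conditional of $\pi$ at $x$ is the average of the conditionals of $\pi_0$ and $\pi_1$; since $\pi$ is induced by a map this conditional is a Dirac mass for $\mu_0$-a.e.\ $x$, and an average of two probability measures is a Dirac mass only when both coincide with it. Hence $\pi_0=\pi_1$, so $\og(\mu_0,\mu_1)=\{\pi\}$ with $\pi=T_\#\mu_0$, and $\sigma_\pi:=(e_0,e_1)_\#\pi=(\id,e_1\circ T)_\#\mu_0$ is an optimal plan induced by a map.

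Next I would observe that it suffices to show that \emph{every} $\sigma\in\opt{\mu_0}{\mu_1}$ is induced by a map. Indeed, granting this, uniqueness is automatic: if $\sigma,\sigma'$ are two such maps then $\tfrac12(\sigma+\sigma')$ is again optimal, hence again induced by a map by the very claim, which forces the two maps to agree $\mu_0$-a.e.; and once the optimal plan is unique it must equal the liftable plan $\sigma_\pi$, which in turn gives the final assertion, the unique lift being the dynamical plan $\pi$ from the first step. So the entire content lies in the map property for static plans.

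The core of the argument, and the step I expect to be the main obstacle, is therefore to establish the map property for a static optimal plan \emph{without} being able to lift it to a geodesic plan, which is exactly the freedom one loses in a non-geodesic space. I would argue by contradiction. If $\sigma$ is not induced by a map, then, disintegrating $\sigma=\int\sigma_x\d\mu_0(x)$ along the first coordinate, the set $E$ on which $\sigma_x$ fails to be a Dirac mass has positive $\mu_0$-measure, and a measurable selection splits the part of $\sigma$ over $E\times X$ into two sub-plans that share a common first marginal but have genuinely different conditionals on a set of positive measure. Since a sub-plan of a $c$-cyclically monotone plan is again $c$-cyclically monotone, and since restricting a plan only shrinks its marginals and hence preserves their absolute continuity with respect to $\m$, after normalising I obtain two distinct optimal plans between absolutely continuous marginals, to which the hypothesis applies. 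The delicate point is to convert this static non-uniqueness into non-uniqueness of \emph{geodesic} plans, thereby contradicting the first step: the natural route is to arrange the selection so that the two competitors share \emph{both} marginals $(\alpha_0,\alpha_1)$, pair each against the unique geodesic-induced plan $\sigma_{\pi'}$ that the first step provides for $(\alpha_0,\alpha_1)$, and lift that (already liftable) plan, so as to produce two distinct dynamical competitors for $\og(\alpha_0,\alpha_1)$. The argument must be organised so that only the liftable plan $\sigma_{\pi'}$ is ever lifted — the non-map defect of $\sigma$ supplying the genuinely second geodesic, rather than being re-fed into the same obstruction one level down — and checking that the selection yields distinct conditionals on a positive-measure set and that all normalisations give honest probability measures is then routine.
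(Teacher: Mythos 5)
Your two preliminary reductions are sound and agree with the paper: convexity of $\og(\mu_0,\mu_1)$ together with the map hypothesis gives uniqueness of the dynamical plan, and once every static optimal plan is shown to be induced by a map, static uniqueness and the ``in particular'' statement about the unique lift follow. The start of your core argument also matches the paper's: there, one splits the target by a ball $B(\xi_{i_0},1/j_0)$ from a countable dense family, chosen so that the conditionals $\sigma_x$ give this ball mass strictly between $0$ and $1$ on a $\mu_0$-positive set, and then reweights each piece by $\frac{\min\{\rho^1_0,\rho^2_0\}}{\rho^k_0}\circ\mathrm{P}^1$ to equalize the first marginals; your ``measurable selection'' glosses over this reweighting, but that is a repairable technicality. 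Your observations that sub-plans inherit $c$-cyclical monotonicity and absolute continuity of marginals are correct and are used in the paper as well.

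The decisive step, however, is where your proposal breaks down: you cannot ``arrange the selection so that the two competitors share both marginals.'' The entire utility of the splitting is that the two sub-plans $\hat\sigma^1,\hat\sigma^2$ have \emph{mutually singular} second marginals (one lives over the ball, the other over its complement), and no choice of selection can remove this, because mere failure of the map property does not yield two distinct static optimal plans with identical marginals --- an optimal plan can be unique yet not a map, as in tripod-like situations. With the second marginals necessarily different, your route gives you only one liftable plan per marginal pair and no second dynamical competitor; and, as you yourself note, in a non-geodesic space the defective static plan cannot be lifted to supply one, so the contradiction with your Step 1 never materialises. The paper's mechanism is different and does not route through uniqueness at all: take \emph{any} $\pi^k\in\og(\hat\mu^k_0,\hat\mu^k_1)$, $k\in\{1,2\}$ (non-empty by hypothesis; these are not lifts of the $\hat\sigma^k$), and consider the \emph{sum} $\pi^1+\pi^2$. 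Since $(e_0,e_1)_\#\pi^k$ and $\hat\sigma^k$ are both optimal between the same marginals, they have equal cost, while $\hat\sigma^1+\hat\sigma^2\ll\sigma$ is concentrated on a $c$-cyclically monotone set and hence optimal between $2\hat\mu^1_0$ and $\hat\mu^1_1+\hat\mu^2_1$; therefore $\pi^1+\pi^2$ is, after normalisation, an optimal dynamical plan between absolutely continuous measures that is manifestly not induced by a map (from a common first marginal it reaches two mutually singular targets), contradicting the hypothesis directly. Without this sum-and-cost-comparison construction, your sketch has no way to produce the required non-map dynamical plan, and this is a genuine gap rather than a routine verification.
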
 
\begin{proof}
Let $\sigma\in\opt{\mu_0}{\mu_1}\subset\P(X^2)$. Suppose that $\sigma$ is not induced by a map. Then there exists a $\mu_0$-positive measure Borel set $A\subset X$ such that $\sigma_x$ is not a Dirac mass for any $x\in A$, where $\{\sigma_x\}_{x\in X}$ is a disintegration of $\sigma$ with respect to the projection $\mathrm{P}^1$. Write
\[A=\bigcup_{i,j\in\N}A_{ij},\]
where $A_{ij}\coloneqq \{x\in A: \sigma_x(B(\xi_i,1/j))\in(0,1)\}$, and $\{\xi_i\}_{i\in\N}$ is dense in $X$. Sets $A_{ij}$ are measurable, since the maps $x\mapsto \sigma_x(B(\xi_i,1/j))$ are measurable for all $i,j\in\N$ by the disintegration theorem. Since $\mu_0(A)>0$, there exist $i_0$ and $j_0$, such that $\mu_0(A_{i_0j_0})>0$. 

Define now $\sigma^1$ and $\sigma^2$ as
\[\sigma^1\coloneqq \sigma\lvert_{X\times B(\xi_{i_0},\frac1j_{0})}, \quad \sigma^2\coloneqq \sigma\lvert_{X\times(X\setminus B(\xi_{i_0},\frac1j_{0}))}.\]

For $k\in\{1,2\}$, define $\hat\sigma^k$ as the measure for which
\[\int f\d\hat\sigma^k\coloneqq \int f\frac{\min\{\rho^1_0,\rho^2_0\}}{\rho^k_0}\circ \mathrm{P}^1\d\sigma^k,\]
for all positive Borel functions $f$, where $\rho^k_0$ is the density of $\mathrm{P}^1_\#\sigma^k$ with respect to the reference measure $\m$. Here we use convention $\frac{\min\{\rho^1_0,\rho^2_0\}}{\rho^k_0}=0$, when $\rho^k_0=0$. Since the function $\frac{\min\{\rho^1_0,\rho^2_0\}}{\rho^k_0}\in[0,1]$, we have that $\hat\sigma^k$ is a well-defined and finite measure. By the definition of $A_{{i_0}{j_0}}$, we have that $\rho^k_0\neq 0$ for $\mu_0$-almost every $x\in A_{{i_0}{j_0}}$. In particular, by the absolute continuity of $\mu_0$, there exists an $\m$-positive measure set where $\rho^k_0\neq 0$, and thus $\hat \sigma^{k}$ is non-trivial for $k\in\{1,2\}$. 

For $k\in\{1,2\}$ and $j\in\{0,1\}$, write $\hat\mu^k_j\coloneqq \mathrm{P}^{j+1}_\#\hat\sigma^k$. Then we have that $\hat\mu^1_0=\hat\mu^2_0$, $\hat\mu^1_1\perp\hat\mu^2_1$, and $\hat\sigma^k\in\opt{\hat\mu^k_0}{\hat\mu^k_1}$ (or more precisely, $\frac1N\hat\sigma^k\in\opt{\frac1N\hat\mu^k_0}{\frac1N\hat\mu^k_1}$, where $N$ is the normalisation constant $N=\hat\mu_0^1(X)=\hat\mu_0^2(X)$). Since $\hat\sigma^k\ll \sigma$, we have that $\hat\mu^k_j\ll\m$. Thus by assumption, for $k\in\{1,2\}$, there exists an optimal dynamical plan $\pi^k\in\og(\hat\mu^k_0,\hat\mu^k_1)$. On the other hand, since $(\hat\sigma^1+\hat\sigma^2)\ll \sigma$ and since 
\[\int d^2(x,y)\d(\hat\sigma^1+\hat\sigma^2)(x,y)=\int d^2(x,y)\d((e_0+e_1)_\#(\pi^1+\pi^2))(x,y),\] we have that $\pi^1+\pi^2$ is an optimal dynamical plan between absolutely continuous measures $2\hat\mu_0^1$ and $\hat \mu_1^1+\hat\mu_1^2$ that is not induced by a map. Hence, we arrive to a contradiction with the assumption. 
\end{proof}
\begin{remark} In the above proof the full existence and uniqueness of optimal geodesic plans is not needed, but instead existence and uniqueness of optimal geodesic plans inside some linearly convex subset of $\P(\geo(X))$. In particular, the proof can be adapted to prove the uniqueness of the optimal plan in $\opt{\mu_0}{\mu_1}$ between absolutely continuous measures in the essentially non-branching $CD(K,\infty)$-spaces  by using the result \cite[Corollary 5.22]{Kell} of the existence and uniqueness of optimal geodesic plans among all plans $\pi$ for which $\mu_t=(e_t)_\#\pi$ is absolutely continuous for all $t\in[0,1]$. 
\end{remark}

\subsection{(Measured) Gromov--Hausdorff tangents}
There are different notions of blow-ups for metric (measure) spaces. The one that we will use is based on a convergence of pointed metric spaces in the Gromov--Hausdorff sense:
\begin{definition}[pointed Gromov--Hausdorff convergence]Let $(X_i)_{i\in\N}=(X_i,d_i,x_i)_{i\in\N}$ be a sequence of pointed, complete, separable and locally compact geodesic spaces. Then $X_i\longrightarrow X_\infty=(X_\infty,d_\infty,x_\infty)$, if for all $R>0$ there exists a sequence $\varepsilon_i\to 0$ and $(1,\varepsilon_i)$-quasi-isometries $f_i\colon \bar{B}(x_i,R)\to \bar{B}(x_\infty,R)$ with $x_i\mapsto x_\infty$.
\end{definition}
Here $f$ being $(1,\varepsilon)$-quasi-isometry is defined by requiring that $f$ is  1-biLipschitz up to an additive constant $\varepsilon>0$ with an $\varepsilon$-dense image.

Tangents of a metric space $X$ at a point $x\in X$ are then obtained by looking at sequences of the form $(X,\lambda_id,x)$ with $\lambda_i\to \infty$.
\begin{definition}Let $X$ be a complete, separable and locally compact geodesic space, and let $x\in X$. A pointed metric space $(Y,d_Y,y)$ is a Gromov--Hausdorff tangent of $X$ at $x$, $(Y,d_Y,y)\in \Tan(X,x)$, if there exists a sequence $\lambda_i\to\infty$ so that $(X,\lambda_id,x)\longrightarrow (Y,d_Y,y)$ in the pointed Gromov--Hausdorff topology. 
\end{definition}
For doubling metric spaces the set of tangents at each point is non-empty. On the other hand, in general tangents are not unique. 

We point out that there exist notions of tangents of metric measure spaces that take into account the convergence of the (normalised) measure, which are in many cases more suitable for the study of metric measure spaces. However, for our purposes it is enough to consider the convergence as a metric concept (keeping in mind that we are assuming $\spt\m=X$). 

\section{One-dimensionality of metric measure spaces}
In this section we provide a generalisation of the following theorem
\begin{theorem*}[{\cite[Theorem 3.7]{Kitabeppu-Lakzian}}] Let $(X,d,\m)$ be an $RCD^*(K,N)$ space for $K\in\R$ and $N\in(1,\infty)$. Assume that there exists a point $x_0\in X$ such that there exists a unique (up to an isomorphism) measured Gromov-Hausdorff tangent of $(X,d,\m)$ at $x_0$ isomorphic (as a pointed metric measure space) to $(\R,\lvert\cdot \lvert,c\mathcal{L}^1,0)$. Then for any $x\in X$, there exists a positive number $\varepsilon>0$ such that $B(x,\varepsilon)$ is isometric to $(-\varepsilon,\varepsilon)$ or to $[0,\varepsilon)$.
\end{theorem*}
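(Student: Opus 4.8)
The plan is to verify that an $RCD^*(K,N)$ space meeting the hypothesis falls under the optimal-transport machinery of this section, and then to run a blow-up argument at $x_0$ followed by a propagation argument over $X$. First I would record the two structural inputs. Since every $RCD^*(K,N)$ space is an essentially non-branching $MCP(K,N)$ space, the existence and uniqueness of optimal transport maps between absolutely continuous measures is available by Cavalletti--Mondino, so the hypotheses on optimal maps used throughout the paper hold. Second, the assumption at $x_0$ gives a unique \emph{measured} Gromov--Hausdorff tangent equal to the weighted line $(\R,|\cdot|,c\mathcal{L}^1,0)$; forgetting the measure, this yields a unique metric tangent $\Tan(X,x_0)=\{(\R,|\cdot|,0)\}$. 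Thus the task reduces to showing that a space with unique optimal maps and a unique line-tangent at a point is, near every point, isometric to an interval. One classical route would instead invoke rectifiability and constancy of dimension for $RCD^*(K,N)$ spaces to reduce to the already-understood one-dimensional case; I prefer the transport route, since it is the one this section develops and it sidesteps the full structure theory.

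Next I would establish the local model at $x_0$ itself. Fix a small absolutely continuous $\mu_0$ supported near $x_0$ and transport it to a suitable target; by uniqueness the optimal plan is induced by a map and lifts to a unique $\pi\in\og(\mu_0,\mu_1)$. Rescaling by $\lambda_i\to\infty$ and passing to the tangent, uniqueness of the tangent forces the rescaled balls $B(x_0,1/\lambda_i)$ to converge to a fixed segment and, crucially, the transport geodesics through the region to align along the single direction of that line, since any transverse spreading would survive in a tangent and contradict uniqueness. The heart of the matter is then a rigidity upgrade: passing from this infinitesimal alignment to the assertion that some fixed ball $B(x_0,\varepsilon)$ is \emph{genuinely} isometric to an interval. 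Here I would use the curvature-dimension lower bound via Bishop--Gromov monotonicity, whose small-scale value is pinned to the one-dimensional density read off from the measured tangent, together with the no-branching mechanism below to exclude the creation of new directions at a definite scale.

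To exclude branching quantitatively I would reuse the mechanism of Proposition \ref{ylim}: if at some interior point two geodesics of an optimal plan separated, one could restrict the induced plan to two mutually singular targets over a common absolutely continuous marginal and produce an optimal dynamical plan not induced by a map, contradicting uniqueness. This both forbids bifurcation of the transport rays and shows that the one-dimensional structure is locally an honest arc rather than a graph-like object.

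Finally I would globalise by a connectedness argument. Let $U\subset X$ be the set of points admitting a neighbourhood isometric to an open interval or to a half-open interval $[0,\varepsilon)$; then $U$ is open and contains $x_0$. It is also closed: a limit point $x$ of $U$ is approached along the essentially unique arc supplied by the local structure of nearby points, and the no-branching conclusion together with the curvature bound forces $x$ to carry the same one-dimensional model, the two alternatives in the statement corresponding precisely to $x$ being an interior point or an endpoint of a maximal arc. Since $X$ is a locally compact length space with $\spt\m=X$, it is connected, hence $U=X$, which is the claim. The step I expect to be the main obstacle is the rigidity upgrade of the second paragraph, converting infinitesimal, tangent-level one-dimensionality into an isometry at a definite scale, as this is exactly where curvature rigidity and the absence of branching must be genuinely combined rather than merely invoked.
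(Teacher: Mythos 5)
Your reduction is the right one and matches the paper's actual route: the paper obtains this statement as a corollary of its main results (Corollary \ref{microcor}), using that an $RCD^*(K,N)$ space is an essentially non-branching $MCP(K,N)$ space and that uniqueness of the measured tangent gives $\Tan(X,x_0)=\{(\R,0)\}$, so that Theorem \ref{micro1dgen} (local interval at $x_0$) followed by Theorem \ref{mild1d} (globalization) applies. But both core steps of your sketch have genuine gaps. For the local step, your heuristic that ``any transverse spreading would survive in a tangent'' fails at a single scale: a pair of branching geodesics $\alpha^n,\beta^n$ constructed at scale $\delta_n$ may separate only at scale $o(\delta_n)$, hence be invisible in the blow-up at scale $\delta_n$. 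The paper's proof of Theorem \ref{micro1dgen} resolves exactly this with a construction you do not have: it transports an a.c.\ measure onto the \emph{purely atomic} target $\mu_1^n=\frac12[\delta_{\alpha^n_1}+\delta_{\beta^n_1}]$, and uses \emph{uniqueness} of the optimal plan (comparing against the product plan $\mu_0^n\otimes\mu_1^n$, which is optimal by $c$-cyclical monotonicity if every transport geodesic passed through $x$) to extract a geodesic $\eta^n$ that crosses near $x$ but avoids it; it then performs a \emph{second} rescaling at the closest-approach scale $r_n=d(\im\eta^n,x)\ll\delta_n$, where the tripod configuration $\eta^n_{s^1_n},\eta^n_{s_n},\eta^n_{s^2_n}$ contradicts $\Tan(X,x)=\{(\R,0)\}$. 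Your proposed substitute, Bishop--Gromov density pinning, is not used by the paper, and you yourself flag this ``rigidity upgrade'' as an unproven obstacle; moreover, Proposition \ref{ylim}'s splitting mechanism shows optimal plans induced by maps are unique, but it does not by itself forbid branching geodesics in the space, so it cannot carry the weight you assign it.

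The globalization has a second, sharper gap: your assertion that the set $U$ of interval points is closed is unsupported, and it is actually \emph{false} under the hypothesis you quote. You cite Cavalletti--Mondino for maps ``between absolutely continuous measures,'' but with a.c.-to-a.c.\ maps only, the tripod (see the Remark after Theorem \ref{mild1d}) and Example \ref{1d2d} show the conclusion fails: in the tripod the set of interval points is open, nonempty, and not closed at the branch point. What is needed, and what the paper's Theorem \ref{mild1d} uses, is existence of maps from a.c.\ $\mu_0$ onto \emph{arbitrary} $\mu_1\in\P_2(X)$ --- the contradiction at a boundary point of the maximal arc is produced with a \emph{singular} target, namely $\mu_1$ uniform on two branch segments $\alpha$ and $\beta$, via an explicit $c$-cyclical monotonicity computation; similarly, the blow-up step above needs the two-Dirac target. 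Cavalletti--Mondino do provide this stronger statement (a.c.\ source, arbitrary target), but your proof must invoke that form, and must replace the open-and-closed hand-wave by the maximal-arc case analysis ($a=b=\infty$ versus $b<\infty$, limit point $\alpha_s$, choice of $z$ off both arcs) that the paper carries out.
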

In Theorem \ref{micro1dgen} we state the result implying one-dimensional manifold structure from assumptions on the one-dimensionality at a point at infinitesimal level, analogously to the original result, when imposing existence and uniqueness of optimal transport maps. In Theorem \ref{mild1d} we give a local counterpart of the result for the case where uniqueness of transport maps is lost. Proofs presented here take advantage of the existence of optimal transport maps (assumption which may be justified by the results in \cite{Gigli-Rajala-Sturm,Rajala-Sturm,Cavalletti-Mondino,Kell,Schultz2018}), and by that simplify the ones given for Theorem 3.7 (and hence for Theorem 1.1) in \cite{Kitabeppu-Lakzian}.
\begin{theorem}\label{mild1d}Let $(X,d,\m)$ be a metric measure space with the following properties:
\begin{enumerate}
\item \label{exmaps} For every $\mu_0\in\P_2^{ac}(X)$ and $\mu_1\in\P_2(X)$, there exists $\pi\in\og(\mu_0,\mu_1)$ that is induced by a map from $\mu_0$.
\item \label{liibalaiba} There exists a point $x\in X$, and a neighbourhood $B(x,r)$ isometric to an open interval in $\R$. 
\end{enumerate}
Then $X$ is a one-dimensional manifold, possibly with boundary. 
\end{theorem}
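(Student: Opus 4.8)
The plan is to upgrade the interval neighbourhood of \eqref{liibalaiba} to a \emph{maximal} one-dimensional chart and to show, using \eqref{exmaps}, that it exhausts $X$. Since $(X,d)$ is a complete, locally compact length space it is geodesic by Hopf--Rinow, and, $d$ being finite, connected. Writing $\psi\colon(-r,r)\to B(x,r)$ for the isometry of \eqref{liibalaiba} with $\psi(0)=x$, I extend $\psi$ to an isometry $\Phi\colon J\to X$ with open image $N\coloneqq\Phi(J)$, where $J$ is a maximal open interval; this is obtained by gluing interval charts along the geodesic $t\mapsto\psi(t)$ in both directions. The set $N$ is open by construction. When an endpoint of $J$, say $T_+$, is finite, completeness forces $\Phi(t)\to p$ as $t\to T_+$ for a single point $p\in X$, and $\Phi$ restricted near $T_+$ is a geodesic entering $p$ whose points, lying in $N$, already have one-dimensional neighbourhoods. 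Everything then reduces to the local structure of $X$ at such a frontier point $p$ (allowing also that the two extensions meet, closing $N$ up into a circle).

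At $p$ there is a distinguished incoming geodesic $\gamma$, and I would split into three cases. If $p$ has exactly one geodesic continuation -- equivalently, a bent pair of segments, which carries the path metric of a straight interval -- then a neighbourhood of $p$ is isometric to an interval and $\Phi$ extends past $T_+$, contradicting maximality. If $p$ has no continuation and $B(p,\rho)$ is isometric to $[0,\rho)$, then $p$ is a boundary point; crucially $B(p,\rho)\subseteq N\cup\{p\}$, so $N\cup\{p\}$ remains a neighbourhood of $p$. The remaining possibility -- which is \emph{the main obstacle} -- is that $p$ is a \emph{branch} (or ``fat'') point: there are at least two distinct unit-speed geodesics $\alpha,\beta$ leaving $p$ transversally to the incoming segment. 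Since the interior of the incoming segment already lies in $N$ and is genuinely one-dimensional, $p$ locally separates it from these new directions, giving the \emph{funnel identity}: for the point $\gamma(-u)$ at distance $u$ from $p$ along $\gamma$ and a point at distance $v$ from $p$ along $\alpha$ or along $\beta$, the minimising geodesic passes through $p$ and the distance equals $u+v$.

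To rule this out I would use \eqref{exmaps}. Pick $\mu_0\in\P_2^{ac}(X)$ supported on the incoming segment near $p$, atomless as may be arranged, and set $\mu_1\coloneqq\tfrac12\mu_\alpha+\tfrac12\mu_\beta$, where $\mu_\alpha,\mu_\beta$ are the images under $\alpha,\beta$ of one fixed atomless profile on $(0,\delta)$. By the funnel identity the cost $(u+v)^2$ between a source point of depth $u$ and a target point of depth $v$ is independent of the leg, so every optimal geodesic plan projects, under the two depth maps, onto an optimal coupling of the depth-distributions of $\mu_0$ and $\mu_1$ for the supermodular cost $(u+v)^2$. That one-dimensional coupling is the anti-monotone rearrangement and is unique; hence every optimal plan sends each source depth to a single target depth, while at that depth the target mass is shared equally between $\alpha$ and $\beta$. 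Since each source depth is carried by a single point of the segment, that point's mass must be split between the two legs, so no optimal geodesic plan from $\mu_0$ is induced by a map -- contradicting \eqref{exmaps}.

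It follows that every finite endpoint is a boundary point, so $N$ together with its endpoints is open; being the image of a complete interval it is also closed, and being non-empty it equals $X$ by connectedness. Reading off the four outcomes of the maximal extension -- both ends infinite, one end infinite, both ends boundary points, or the two ends meeting -- shows that $X$ is isometric to $\R$, to a half-line, to a compact segment, or to a circle, each a one-dimensional manifold with (possibly empty) boundary. I expect the genuinely delicate steps to be the verification of the funnel identity at the first frontier point (which rests on the interior of the incoming segment being already one-dimensional, so that the new directions can only attach at $p$) and the uniqueness of the anti-monotone depth coupling, since it is precisely this uniqueness that forces the mass to split and produces the contradiction.
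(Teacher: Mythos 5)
Your architecture is essentially the paper's: isolate the maximal one-dimensional piece through $x$, locate a frontier point $p$, note that all transport from the incoming segment to two legs emanating from $p$ must funnel through $p$, and contradict assumption \eqref{exmaps} with a target measure $\mu_1$ spread over the two legs --- correctly exploiting that $\mu_1$ need only lie in $\P_2(X)$, not be absolutely continuous. The one step where you genuinely diverge is the endgame: the paper violates $c$-cyclical monotonicity directly with a three-point crossing configuration (pairs $(x_1,\alpha(t)),(x_2,\beta(t))\in\Gamma$ plus an intermediate $\tilde x$, and an explicit square expansion), whereas you project every plan under the depth maps, observe that the funnel identity makes the cost $(u+v)^2$ leg-independent, and invoke uniqueness of the antitone rearrangement for this strictly supermodular one-dimensional cost, so that the single source point at each depth must split between the legs. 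This is a correct and clean alternative; it costs you atomless depth marginals, which is harmless because \eqref{exmaps} actually forces $\m$ to be atomless (if $\m(\{q\})>0$ then $\mu_0=\delta_q\in\P_2^{ac}(X)$, and a map-induced plan from a Dirac forces $\mu_1$ to be a Dirac), though you should additionally take the target profile supported on depths where $\alpha(v)\neq\beta(v)$, since two distinct geodesics issuing from $p$ may coincide initially and branch later.

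There is, however, one genuine gap: the assertion that $N$, \enquote{being the image of a complete interval[,] is also closed}. An arclength-parametrised, locally minimising curve defined on an unbounded interval is merely $1$-Lipschitz into $X$; its image need not be closed, and it could a priori accumulate on a frontier point $y$ at \emph{infinite} parameter --- a configuration your case analysis, which only examines finite endpoints $T_\pm$ of $J$, never sees. This is exactly the case the paper treats by a separate, purely metric argument ($a=b=\infty$): a geodesic $\alpha$ from $x$ to such a limit point $y$ must traverse $\im\gamma$ monotonically up to its first exit time $s$, the traversed portion has length at most $d(x,y)<\infty$, hence $\alpha_s=\gamma_{s_\infty}$ for some finite parameter $s_\infty$; but $\gamma_{s_\infty}$ has a neighbourhood of the form $\gamma(s_\infty-\delta,s_\infty+\delta)$, contradicting $\alpha_s\notin\im\gamma$. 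The same argument patches your proof (it shows any geodesic leaving $N$ exits after finite traversed length, which is impossible at an infinite end), so the gap is fixable, but as written the closedness claim is false in general and needs this extra step. A related cosmetic point: your $\Phi$ is only a locally isometric arclength embedding, not a global isometry onto its image (the component may nearly close up into a circle); since all your uses of $\Phi$ --- the Cauchy endpoint limit and the funnel identity --- are local, this does not affect the argument.
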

Due to the existence of optimal transport maps in very strict $CD(K,N)$ -spaces \cite{Schultz2019}, and in essentially non-branching $MCP(K,N)$-spaces \cite{Cavalletti-Mondino}, we get the following immediate corollary of Theorem \ref{mild1d}.
\begin{corollary}\label{cor1d}
Let $(X,d,\m)$ be a very strict $CD(K,N)$ -space ($N\in(1,\infty)$), or essentially non-branching $MCP(K,N)$-space (or ess.\ nb., qualitatively non-degenerate space, see \cite{Kell}). Suppose that there exists a point $x\in X$, and a neighbourhood $B(x,r)$ isometric to an open interval in $\R$. Then $X$ is a one-dimensional manifold, possibly with boundary.
\end{corollary}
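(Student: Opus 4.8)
The plan is to show that the connected component of the ``locally one-dimensional'' part of $X$ containing $x$ already exhausts $X$, the only possible additional points being manifold boundary points, and that the existence of optimal maps (assumption \eqref{exmaps}) is exactly what forbids any branching or higher-dimensional spreading at the ends. First I would let $G\subseteq X$ be the (open) set of points admitting a neighbourhood isometric to an open interval; by \eqref{liibalaiba} we have $x\in G$, so $G\neq\emptyset$. Since $X$ is geodesic (Hopf--Rinow) and connected, it suffices to analyse the component $U$ of $G$ containing $x$. As $U$ is connected and locally isometric to open intervals of $\R$, developing these local isometries shows that $U$ is isometric either to a circle or to an open interval. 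In the circle case $U$ is compact, hence closed as well as open, so $U=X$ by connectedness and $X$ is a boundaryless one-dimensional manifold; thus I may assume $U$ is isometric to an open interval via an isometry $\phi\colon(\alpha,\beta)\to U$.

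Next I would locate the frontier points. Take an endpoint with $\beta<\infty$ (the endpoint $\alpha$ is symmetric, and if both are infinite then $U\cong\R$ is closed and $X=U$). Because $d(\phi(t),\phi(\tau))=|t-\tau|$, the net $(\phi(t))_{t\to\beta}$ is Cauchy, so by completeness it converges to some $p\in X$ with $d(\phi(t),p)=\beta-t$. Consequently, for small $\rho$ one has $B(p,\rho)\cap U=\phi((\beta-\rho,\beta))$, a single segment ending at $p$, and $p\notin U$ (otherwise $U$ would extend past $p$, contradicting maximality). What remains is to prove that $p$ is a manifold boundary point, i.e. that $B(p,\rho)$ is isometric to $[0,\rho)$ for some $\rho>0$.

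This last point is the crux, and the only place where \eqref{exmaps} enters. I would argue by contradiction: if $B(p,\rho)\not\cong[0,\rho)$ for all small $\rho$, then, since the incoming side is a single segment and $X$ is geodesic, one can extract two distinct geodesics $\eta^1,\eta^2$ issuing from $p$ into $X\setminus\bar U$ (this short geometric reduction, turning ``fattening'' into two outgoing directions, is itself part of the argument). Now choose $\mu_0\in\P_2^{ac}(X)$ non-atomic (avoiding the at most countably many atoms of $\m$), supported on $\phi((\beta-\delta,\beta))$, and a target $\mu_1=\tfrac12\nu^1+\tfrac12\nu^2$ with $\nu^i$ non-atomic, supported on $\eta^i$, the two branches carrying identical distance-from-$p$ profiles. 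For sources and targets close enough to $p$, every competing path must pass through $p$, so the cost $d^2$ depends only on the distances $a,b$ to $p$, through $(a+b)^2$, independently of the branch. Since $\partial_a\partial_b(a+b)^2>0$ and the marginals are non-atomic, the coupling of the distances-to-$p$ is the unique monotone (here decreasing) rearrangement; but then matching the split target marginals forces every source point to divide its mass equally between $\eta^1$ and $\eta^2$. Hence no plan in $\opt{\mu_0}{\mu_1}$, and a fortiori no element of $\og(\mu_0,\mu_1)$, is induced by a map, contradicting \eqref{exmaps}. Therefore $B(p,\rho)\cong[0,\rho)$ and $p$ is a boundary point.

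Finally I would conclude: with both frontier points (at most two) shown to be manifold boundary points, $\bar U=U\cup\{p_\alpha,p_\beta\}$ is a one-dimensional manifold with boundary; it is closed by construction and open because each frontier point's half-open neighbourhood $B(p,\rho)\cong[0,\rho)$ lies in $\bar U$. By connectedness of $X$ this gives $\bar U=X$, which is the claim. The hard part is the forced equal splitting in the preceding paragraph: the essential insight is that two branches that are equidistant (as distance profiles) from the one-dimensional source make the optimal distance-coupling unique, so that any optimal plan must split mass, which is precisely what the existence of an optimal map excludes.
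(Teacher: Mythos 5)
Your proposal does not follow the paper's proof of Corollary \ref{cor1d} at all: in the paper this corollary is an immediate consequence of Theorem \ref{mild1d}, the only content of its proof being the verification of hypothesis \eqref{exmaps} via the existence results for optimal maps in very strict $CD(K,N)$-spaces \cite{Schultz2019} and in essentially non-branching $MCP(K,N)$-spaces \cite{Cavalletti-Mondino} (resp.\ \cite{Kell}). You instead re-prove (a version of) Theorem \ref{mild1d} from scratch while simply invoking ``assumption \eqref{exmaps}'', and you never explain why the curvature hypotheses of the corollary supply it. This is the genuine gap relative to the statement you were asked to prove, and it is not cosmetic: your target measure $\mu_1=\tfrac12\nu^1+\tfrac12\nu^2$ is carried by two geodesic arcs and is in general \emph{singular} with respect to $\m$, so you need existence of maps for arbitrary $\mu_1\in\P_2(X)$, not just absolutely continuous ones -- precisely the strength of the cited results, and precisely what fails in the tripod example discussed in the paper's remark after Theorem \ref{mild1d}. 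Without naming (or proving) this input, the corollary is not established.

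Within your re-proof of the theorem there are further soft spots. First, the step you yourself defer -- extracting two distinct geodesics $\eta^1,\eta^2$ issuing from $p$ -- is not a routine remark; in the paper it is carried out concretely (choose $z\in B(\alpha_s,r)\setminus(\im\gamma\cup\im\alpha)$, with $r$ small enough that every geodesic from $x$ to $z$ passes through $\alpha_s$, the second branch being a geodesic from $x$ to $z$), and declaring it ``part of the argument'' leaves the pivotal point unproven. Second, your structural claims about $U$ are too strong: developing local isometries gives that $U$ is \emph{locally}, not globally, isometric to an interval, so $d(\phi(t),\phi(\tau))=\lvert t-\tau\rvert$ and the claim that $B(p,\rho)\cap U$ is a single segment are unjustified; a lollipop-type configuration, where both ends of $U$ converge to the same frontier point, is not excluded by your reasoning (the transport argument can be adapted to it, but you must say how, as the paper's formulation does via maximality of $\im\gamma$). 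On the positive side, your endgame -- reducing to the one-dimensional cost $(a+b)^2$ through $p$ and using uniqueness of the antitone rearrangement to force equal mass splitting -- is a correct and rather clean alternative to the paper's explicit $c$-cyclical monotonicity contradiction; note also that your worry about atoms of $\m$ is moot in the corollary's setting, since $MCP(K,N)$ and very strict $CD(K,N)$ with $N<\infty$ force $\m$ to be atomless, while the paper's monotonicity computation needs no non-atomicity of $\mu_0$ in the first place.
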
 
It is worth noticing, that while Corollary \ref{cor1d} is known to be false in general $MCP(K,N)$-space by the example given by Ketterer and Rajala in \cite{Ketterer-Rajala}, it remains still open in general $CD(K,N)$-space:
\begin{question}Let $(X,d,m)$ be a $CD(K,N)$-space, and $B(x,r)\subset X$ isometric to an open interval. Is $X$ a manifold (possibly with boundary)?
\end{question}

\begin{proof}[Proof of Theorem \ref{mild1d}] The beginning of the proof goes just as that of Theorem 3.7 in \cite{Kitabeppu-Lakzian}. Denote by $\mathcal{F}$ the set of all the points that have a neighbourhood isometric to an open interval. Clearly, $\mathcal{F}$ is an open set in $X$. Suppose now that $X\setminus \mathcal{F}\neq \varnothing$. Since $\mathcal{F}$ is assumed to be non-empty, we deduce that $\mathcal{F}$ is not closed, in particular it is not a circle.

Let $\gamma\colon (-a,b)\to X$, $a,b\in (0,\infty]$, be a locally minimising, unit speed curve for which $\gamma_0=x$, $x$ being as in the assumption \eqref{liibalaiba}, and $\gamma(-a,b)$ is the maximal connected subset of $\mathcal{F}$ containing $x$, and let $\varepsilon>0$ be such that $\gamma\lvert_{(-\varepsilon,\varepsilon)}$ is an isometry. 

If $a=b=\infty$, we have that $\gamma(-a,b)=X$ by the following argument. Suppose that there exists a limit point $y\in \overline{\im\gamma}\setminus \im\gamma$ of $\gamma$. Let $(t_i)\subset\R$ be a sequence so that $\gamma_{t_i}\longrightarrow y$. We may assume that $t_i$ is increasing. If $(t_i)$ is bounded, then there exists $t_\infty$ so that $t_i\longrightarrow t_\infty$, and hence by continuity we would have that $y=\gamma_{t_\infty}$. Hence $t_i\longrightarrow \infty$. Let now $\alpha$ be a geodesic from $x$ to $y$, and let $s\coloneqq \inf\{t: \alpha_t\notin \im\gamma\}$. We know that $s>0$, since the neighbourhood of $x$ is isometric to an open interval. On the other hand, since $\alpha$ is a geodesic and $l(\gamma)=\infty$, we know that there exists a sequence $(s_i)$ so that $s_i\longrightarrow s_\infty<\infty$, and $\gamma_{s_i}\longrightarrow \alpha_s$. Hence, $\gamma_{s_\infty}=\alpha_s$. This is in contradiction with the definition of $s$, since $\gamma_{s_\infty}$ has a neighbourhood of the form $\gamma(s_\infty-\delta,s_\infty+\delta)$. Thus, $\gamma(-a,b)$ is open and closed, and hence $\gamma(-a,b)=X$ giving a contradiction.

By the above, we may assume that $b<\infty$. Let $y\in X\setminus \im\gamma$, $\alpha$ be a unit speed geodesic from $x$ to $y$, and, as above, $s\coloneqq \inf\{t: \alpha_t\notin \im\gamma\}$. Since by the arguments above, $\gamma_t\neq \alpha_s$ for any $t\in(-a,b)$, there exists a sequence $(t_i)$ that (we may assume without loss of generality to) converge to $b$ so that $\gamma_{t_i}\longrightarrow \alpha_s$. By the maximality of $\im\gamma$, we know that $B(\alpha_s,r)\setminus (\im\gamma\cup\im \alpha)\neq\varnothing$ for any $r>0$. Let now $z\in B(\alpha_s,r)\setminus (\im\gamma\cup\im \alpha)$, where $l(\alpha)>r>0$ is chosen small enough so that any geodesic from $x$ to $z$ goes through the point $\alpha_s$. We are ready to arrive to a contradiction with \eqref{exmaps}. Let $\beta$ be a unit speed geodesic from $x$ to $z$. Take $t_1>s$ so that $d(\alpha_s,\alpha_{t_1})=d(\alpha_s,z)$, and define measures $\mu_0\coloneqq \frac1{\m(\gamma([0,\varepsilon)))}\m\lvert_{\gamma([0,\varepsilon))}\in\P_2^{ac}(X)$, and $\mu_1\coloneqq \frac12((\alpha\circ\restr_{t_0}^{t_1})_\# \mathcal{L}^1+(\beta\circ\restr_{t_0}^{t_1})_\#\mathcal{L}^1)\in\P_2(X)$, where $t_0$ is chosen so that $\alpha_t\neq \beta_t$ for every $t\in[t_0,t_1]$. 

Then, by using $c$-cyclical monotonicity as follows, we deduce that any plan from $\mu_0$ to $\mu_1$ cannot be given by a map, giving a contradiction with the assumption \eqref{exmaps}. Indeed, if $\sigma\in\opt{\mu_0}{\mu_1}$ is induced by a map, there exists a $c$-cyclically monotone set $\Gamma\subset X\times X$ so that $\sigma(\Gamma)=1$, and $\Gamma_{\tilde x}\coloneqq \{y\in X: (\tilde x,y)\in \Gamma\}$ is a singleton for all $\tilde x\in \mathrm{P}^1(\Gamma)$. By the definition of $\mu_1$, and the fact that $\sigma(\Gamma)=1$, we have that there exist $(x_1,\alpha(t)), (x_2,\beta(t))\in \Gamma$ with some $t\in [t_0,t_1]$. Since $\Gamma_{\tilde x}$ is a singleton for all $\tilde x$, we deduce that $x_1\neq x_2$. Then by the definitions of $\mu_0$ and $\mu_1$, there exists $\tilde x$ between $x_1$ and $x_2$ so that $(\tilde x,y)\in\Gamma$ for some $y\in X$, and so that $\beta(t)\neq y\neq \alpha(t)$. Suppose that $y=\alpha(\tilde t)$, with $\tilde t<t$ (the other cases are analogous). Then
\begin{align}d^2(x_1,\alpha(t))+d^2(\tilde x,y)&=[d(\tilde x,\alpha(t))+d(\tilde x,x_1)]^2+[d(x_1,y)-d(\tilde x,x_1)]^2
\\ &>d^2(\tilde x,\alpha(t))+d^2(x_1,y), \end{align}
which contradicts the $c$-cyclical monotonicity of the set $\Gamma$. \end{proof}

\begin{remark}
The assumption that there exists an optimal map for \emph{all} $\mu_1$, instead of only the absolutely continuous ones, is crucial in Theorem \ref{mild1d}. This can be seen by taking three non-atomic, mutually singular probability measures with full supports on the interval $[0,1]$, and pushing them to different branches of a tripod (see \cite[Example in Section 3] {Kell} for more details). The metric measure space obtained in this way satisfies the assumption of existence of transport maps between absolutely continuous measures, but does not satisfy the conclusion of Theorem \ref{mild1d}.
\end{remark}

One might wonder, whether after relaxing the assumption \eqref{exmaps} to concern only absolutely continuous measures $\mu_1$ one could still conclude that the space is one-dimensional in some appropriate sense (as in the case of the above-mentioned tripod). The following example shows that this is not the case in general.

\begin{example} \label{1d2d} 
We will construct a metric measure space having -- as a metric space -- a one-dimensional part (a line segment), and a two-dimensional part (a circular sector) which has the property that the optimal plan between any two absolutely continuous Borel probability measures is unique and induced by a map (see Figure \ref{viuhka}).  
\begin{figure}[h]
\begin{center}
\input{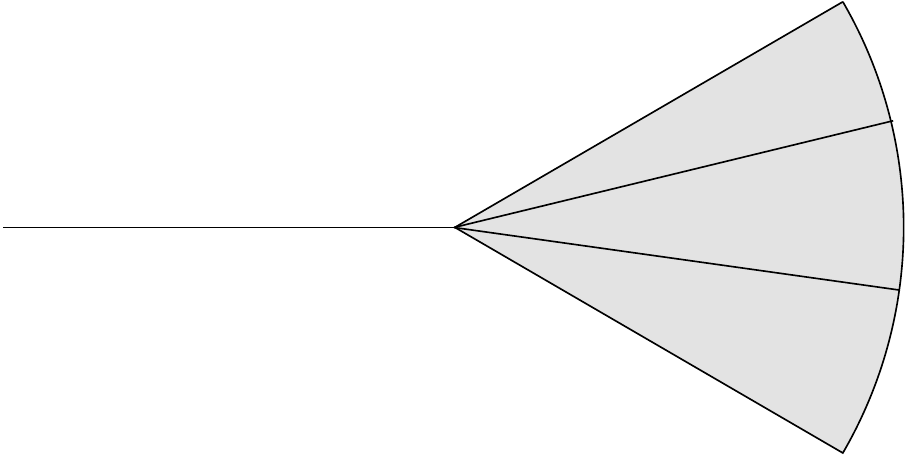_t}
\caption{The space satisfying the weakened assumptions of Theorem \ref{mild1d}, but failing the conclusion.}
\label{viuhka}
\end{center}
\end{figure}
Let us first define auxiliary measures on the unit square $I\times I$. Let $f\colon 2^\N\to I$ be a map defined as 
\[(x_i)_{i\in\N}\mapsto \liminf_{N\to\infty}\frac1N\sum_{i=1}^N x_i.\]
Define a family of measures $\{\mu_t\}_{t\in I}$ on $I\times I$ as a pushforward of the measures 
\[\nu_t\coloneqq\tens_\N [(1-t)\delta_0+t\delta_1]\]
under the graph map $(\iota,f)$, where $\iota\colon 2^\N\to I$ is the Borel isomorphism (up to removing a countable set) $(x_i)_{i\in\N}\mapsto \sum_i x_i2^{-i}$.

Finally, define a measure $\tilde\m$ on $I\times I$ by setting
\[\int g\d\tilde\m\coloneqq \int_I \int_{I\times I} g\d\mu_t\d \mathcal{L}^1(t)=\int_I \int_{I\times \{t\}} g\d\mu_t\d \mathcal{L}^1(t) \]
for every positive Borel function $g$. Here the second equality is due to the fact that $\nu_t(f^{-1}(t))=1$ by the strong law of large numbers. To see that the definition makes sense, we need to show that the map $t\mapsto \mu_t(A)$ is Borel for every $A\in \B(I\times I)$. It suffices to prove the Borel measurability of the map $t\mapsto \nu_t(A)$ in the case where $A$ is of the form \[A=\{x_1\}\times\cdots\times \{x_n\}\times 2^\N.\]
Indeed, by setting 
\[P\coloneqq\{U\subset 2^\N: U=\{x_1\}\times\cdots\times \{x_n\}\times 2^\N \mathrm{\ for\ some\ } x_1,\dots, x_n\in\{0,1\}\}\] and 
\[D\coloneqq \{V\subset 2^\N: t\mapsto \nu_t(V) \mathrm{\ is\ Borel}\},\] we deduce from Dynkin's $\pi-\lambda$ theorem that $\B(2^\N)\subset\sigma(P)\subset D$, if $P\subset D$. Thus, we obtain that $t\mapsto \mu_t(B\times C)$ is Borel measurable for $B,C\in\B(I)$, if $P\subset D$. Applying the $\pi-\lambda$ theorem again, we deduce that $t\mapsto \mu_t(A)$ is Borel, if $P\subset D$. 

Let now $A=\{x_1\}\times\cdots\times \{x_n\}\times 2^\N$. Then the map $t\mapsto \nu_t(A)=\Pi_{i=1}^n [(1-t)\delta_0(x_i)+t\delta_1(x_i)]$ is continuous, thus Borel. Hence, the measure $\tilde\m$ is a well-defined Borel probability measure on $I\times I$.

With $\tilde\m$ defined, we may define a metric measure space $(X,d,\m)$ in the following way. Let 
\[X=([-1,0]\times \{0\})\cup T(I\times I)\subset \R^2,\]
where $T$ is the map $(t,\theta)\mapsto te^{i(\theta-\frac12)}\in\R^2$. Define the metric $d$ on $X$ as the length metric of $X$ as a subset of $\R^2$, and the measure $\m$ as the pushforward of $\tilde\m$ under the map $T$ on $T(I\times I)$, and as the Lebesgue measure on the interval $[-1,0]\times \{0\}$. Then $\m$ is a well-defined, non-trivial and finite measure on $X$ with $\spt\m=X$.  

We will show that the metric measure space $(X,d,\m)$ satisfies the condition that for all $\mu_0, \mu_1\in\P_2^{ac}(X)$ there exists a unique plan $\pi\in\og(\mu_0,\mu_1)$, and it is induced by a map. To do so, we will use the fact, that in $\R^n$ it suffices to check that the starting measure gives zero measure to Lipschitz graph to deduce the existence and uniqueness of the optimal transport map \cite{Gangbo-McCann1996}. First of all, if $G\subset I\times I$ is a Lipschitz graph, then the intersection of $G$ with $ \mathcal{L}^1$-almost every line parallel to $x$-axis has only finitely many points (e.g.\ \cite[Theorem 10.10]{Mattila}). Thus $\tilde\m(G)=0$ due to the fact that each $\mu_t$ is non-atomic. Furthermore, since $T$ is biLipschitz on every set of the form $[\varepsilon, 1]\times I$, $\varepsilon>0$, we have that $\m\lvert_{T(I\times I)}(G)=0$ for every Lipschitz graph $G\subset X$. 

Let now $\mu_0, \mu_1\in\P_2^{ac}(X)$, and $\pi\in\og(\mu_0,\mu_1)$. Write $\pi=\pi^1+\pi^2$, where $\pi^1\coloneqq \pi\lvert_{e_0^{-1}([-1,0]\times \{0\})}$ and $\pi^2\coloneqq \pi\lvert_{e_0^{-1}(T(I\times I))}$. Since $\mu_0\lvert_{[-1,0]\times \{0\}}\ll \mathcal{L}^1$, we have by the following argument that $\pi^1$ is induced by a map. Due to the fact that the measures $\{\nu_t\}$ are concentrated on the pairwise disjoint sets $f^{-1}(t)$, we have that the measure $\tilde\m$ is concentrated on the graph of the Borel function $f\circ \iota^{-1}$. Thus, the measure $\m\lvert_{T(I\times I)}$ is concentrated on a set $F=T(\mathrm{Graph}(f\circ \iota^{-1}))$ having the property that if $te^{i\theta_1},te^{i\theta_2}\in F$, then $\theta_1=\theta_2$. Then by absolute continuity of $\mu_1$, the measure $(e_1)_\#\pi^1$ is concentrated on $F$. Moreover, by the definition of the metric $d$ we have that for points $x=(s,0)\in [-1,0]\times \{0\}$ and $y=te^{i\theta}\in T(I\times I)$, the distance is $d(x,y)=\lvert t-s\rvert$. Hence, $\sigma\coloneqq G_\#(e_0,e_1)_\#\pi^1\in\P(\R\times \R)$ is an optimal plan, where $G((s,0),(t,0))=(s,t)$ for $(t,0)\in [-1,0]\times \{0\}$, and $G((s,0),te^{i\theta})=(s,t)$ for $te^{i\theta}\in F$. By the absolute continuity of $\mu_0\lvert_{[-1,0]\times\{0\}}$ and by the definition of $G$ we have that $P^1_\#\sigma$ is absolutely continuous with respect to the Lebesgue measure. Therefore, $\sigma$ is induced by a map $S\colon \R\to\R\times\R$. Thus, $(e_0,e_1)_\#\pi^1$ is induced by the map $(x,0)\mapsto G^{-1}\circ S(x)$, and so by the uniqueness of geodesics in $X$, the plan $\pi^1$ is also induced by a map.

Thus, it remains to show that $\pi^2$ is induced by a map. Since geodesics starting from $T(I\times I)$ do not branch (i.e. $\gamma^1\lvert_{[0,t]}=\gamma^2\lvert_{[0,t]}$ implies $\gamma^1=\gamma^2$), we may assume after contracting the plan towards the starting point, that $\pi^2(\geo(T(I\times I)))=1$. Hence, by the fact that $\m\lvert_{T(I\times I)}$ gives zero measure to Lipschitz graphs, we conclude that $\pi^2$, and therefore also $\pi$, is induced by a map.
\end{example}

In spite of Example \ref{1d2d}, one may try to find sufficient conditions for the metric measure space $(X,d,\m)$ so that Theorem \ref{mild1d} would still hold after weakening the assumptions to merely consider measures $\mu_1$ that are absolutely continuous with respect to the reference measure $\m$. Naively mimicking the proof of Theorem \ref{mild1d} one gets a sufficient condition for the reference measure $\m$, namely the conclusion of Lemma \ref{polar}, leading to the following theorem. 

\begin{theorem}\label{MCPmild1d}Let $(X,d,\m)$ be an $MCP(K,N)$-space with the following properties:
\begin{enumerate}
\item For every $\mu_0,\mu_1\in\P_2^{ac}(X)$, there exists $\pi\in\og(\mu_0,\mu_1)$ that is induced by a map from $\mu_0$.
\item There exists a point $x\in X$, and a neighbourhood $B(x,r)$ isometric to an open interval in $\R$. 
\end{enumerate}
Then $X$ is a one-dimensional manifold, possibly with boundary. 
\end{theorem}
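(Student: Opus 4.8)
The plan is to run the proof of Theorem \ref{mild1d} essentially verbatim up to the construction of the competitor measures, and to replace only the final step, where the (non-absolutely-continuous) measure $\mu_1$ supported on the two geodesics $\alpha,\beta$ was used. Concretely, I would again let $\mathcal F$ be the open set of points admitting a neighbourhood isometric to an open interval, assume for contradiction that $X\setminus\mathcal F\neq\varnothing$, produce the maximal unit-speed locally geodesic $\gamma$ through $x$ whose image is the component of $\mathcal F$ containing $x$, reduce to $b<\infty$, and obtain the branch point $o:=\alpha_s$ together with the two geometric facts used there: that $o$ is a bottleneck, so every geodesic from a point of $\gamma([0,\varepsilon))$ to a point lying beyond $o$ passes through $o$ (whence $d(\gamma_u,w)=d(\gamma_u,o)+d(o,w)$), and that $B(o,r)\setminus(\im\gamma\cup\im\alpha)\neq\varnothing$ for every $r>0$, so genuine spreading occurs at $o$.

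The new ingredient is the choice of $\mu_1$. I keep $\mu_0:=\m|_{\gamma([0,\varepsilon))}/\m(\gamma([0,\varepsilon)))$ as before, but now take $\mu_1:=\m|_E/\m(E)$, where $E\subset B(o,\rho)$ is a Borel set of positive $\m$-measure consisting of points beyond $o$ whose geodesics back to $\gamma([0,\varepsilon))$ run through $o$. Both measures lie in $\P_2^{ac}(X)$, so assumption (1), together with the lifting of optimal plans to geodesic plans discussed before Proposition \ref{ylim}, provides an optimal $\sigma\in\opt{\mu_0}{\mu_1}$ that is induced by a map $T$ and concentrated on a $c$-cyclically monotone set $\Gamma$ with singleton sections. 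Writing $R(u):=d(o,T(\gamma_u))$ for the radial coordinate of the target, the bottleneck identity turns the pairwise $c$-monotonicity inequality into $\bigl(d(\gamma_u,o)-d(\gamma_{u'},o)\bigr)\bigl(R(u)-R(u')\bigr)\le 0$; since $u\mapsto d(\gamma_u,o)$ is monotone, this forces $R$ to be monotone as well. Hence $T$ is essentially injective in the radial variable: disintegrating $\mu_1=T_\#\mu_0$ over the radius map $w\mapsto d(o,w)$, the angular conditional of $\mu_1$ on the sphere $\{d(o,\cdot)=r\}$ is, for $\mathcal L^1$-almost every $r$, a single Dirac mass, and equivalently the image of $T$ meets almost every such sphere in at most one point.

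This is exactly where $MCP(K,N)$ enters, through the conclusion of Lemma \ref{polar}: the reference measure $\m$ near $o$ admits a polar disintegration whose angular conditionals on the spheres $\{d(o,\cdot)=r\}$ are non-atomic, so that $\m$ assigns zero measure to every Borel set meeting almost every such sphere in a countable set. Applying this to the image of $T$, I conclude that $\mu_1=T_\#\mu_0$ is concentrated on an $\m$-null set, contradicting $\mu_1\ll\m$ with $\mu_1(X)=1$. This rules out the presence of a spreading point in $X\setminus\mathcal F$, and the topological wrap-up identifying $X$ as a one-dimensional manifold — with the only admissible leftover points being genuine interval endpoints, i.e.\ boundary points — proceeds as in Theorem \ref{mild1d}.

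I expect the main obstacle to be establishing and correctly applying the measure-theoretic input of Lemma \ref{polar}: one must show that $MCP(K,N)$ genuinely forces the angular spread of $\m$ at the branch point to be non-atomic, so that curves and spheres through $o$ are $\m$-null. This is precisely the property that fails for branching one-dimensional examples such as the tripod, and that is \emph{not} available from the mere existence of absolutely continuous transport maps — which is why the theorem must buy it from the curvature-dimension hypothesis rather than from assumption (1) alone. A secondary technical point is verifying the bottleneck identity uniformly for all source points in $\gamma([0,\varepsilon))$ and all targets in $E$, and dealing with the radii where the monotone map $R$ is constant; on such a radius a positive $\mu_0$-chunk is sent onto a single sphere, itself $\m$-null by Lemma \ref{polar}, which already yields the contradiction directly.
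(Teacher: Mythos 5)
Your overall plan follows the paper's strategy (reduce to a branch point via the machinery of Theorem \ref{mild1d}, then use a polar disintegration supplied by $MCP(K,N)$ to contradict assumption (1)), but the central measure-theoretic step is wrong. You attribute to Lemma \ref{polar} the statement that the conditionals $\m_r$ of the polar disintegration around the branch point $o$ are non-atomic, so that any set meeting almost every sphere $\partial B(o,r)$ in at most one point is $\m$-null. Lemma \ref{polar} says no such thing, and the property is genuinely false for $MCP(K,N)$-spaces: the tripod with one-dimensional Hausdorff measure satisfies $MCP(0,N)$, yet every sphere around its branch point is a finite set, so all angular conditionals there are purely atomic. (Your diagnosis is in fact backwards: on the tripod the actual conclusion of Lemma \ref{polar} \emph{holds} --- the atoms give $\m_r$-positive mass to balls on both branches --- and it is assumption (1) that fails there.) Moreover, with your choice of a single target set $E\subset B(o,\rho)$ ``beyond $o$,'' radial antitonicity yields no contradiction without non-atomicity: a measure $\mu_1\ll\m$ concentrated on a set meeting almost every sphere in exactly one point is perfectly compatible with the existence, even uniqueness, of optimal maps. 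This is precisely the structure of the fan in Example \ref{1d2d}, where $\m$ beyond the vertex is concentrated on the radial graph $F=T(\mathrm{Graph}(f\circ\iota^{-1}))$, which meets each sphere in a single point; your argument as written would not distinguish that space from an $MCP$-space. So the implication ``image of $T$ meets a.e.\ sphere in one point $\Rightarrow$ image is $\m$-null'' is the gap, and it cannot be bought from the curvature hypothesis.

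What Lemma \ref{polar} actually provides, besides the radial absolute continuity $\varpi\ll\mathcal{L}^1$ (which does validly dispose of your constant-stretch case), is a \emph{two-branch positivity}: for $y,z$ equidistant from $x$ with $B(y,r_0)\cap B(z,r_0)=\varnothing$, there is a set $E\subset(0,\infty)$ of positive Lebesgue measure with $\m_r(B(y,r_0))>0$ \emph{and} $\m_r(B(z,r_0))>0$ for all $r\in E$. The paper's proof exploits exactly this: it applies the lemma to the two endpoints $\alpha(1),\beta(1)$ of the branching pair and takes
$\mu_1\coloneqq\frac{1}{\m(A)}\m\lvert_A$ with $A=\{z\in B(\alpha(1),r_0)\cup B(\beta(1),r_0): d(x,z)\in E\}$,
so that $\m(A)=\int_E\m_r(\cdot)\d\mathcal{L}^1(r)>0$, $\mu_1\in\P_2^{ac}(X)$, and at almost every radius in $E$ the target charges \emph{both} disjoint balls. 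Then your antitonicity relation $\bigl(d(\gamma_u,o)-d(\gamma_{u'},o)\bigr)\bigl(R(u)-R(u')\bigr)\le 0$ --- equivalently the explicit three-point $c$-monotonicity computation from Theorem \ref{mild1d} --- shows a map would have to split mass at those radii, which is the contradiction. In short: $MCP(K,N)$ buys ``both branches are charged at the same radius,'' not non-atomicity, and the target measure must be spread across two disjoint branch-balls at equal radii to exploit it; with that replacement your argument becomes the paper's proof.
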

In the proof of Theorem \ref{MCPmild1d}, we will use the following lemma.
\begin{lemma}\label{polar}Let $(X,d,\m)$ be an $MCP(K,N)$-space. Then 
\begin{align}\label{polardec}
\int_X f \d\m=\int_{(0,\infty)}\int_{\partial B(x_0,r)}f\d\m_r \d\mathcal{L}^1(r),
\end{align}
for every non-negative Borel function $f$, where $\m_r$ is a finite Borel measure on $\partial B(x_0,r)$. Moreover, for any three points $x_0,y,z\in X$ with $d(x_0,y)=d(x_0,z)$, and for any $r>0$ for which $B(y,r)\cap B(z,r)=\varnothing$, there exists a set $E\subset (0,\infty)$ of positive Lebesgue measure such that $\m_r(B(y,r)),\m_r(B(z,r))>0$ for all $r\in E$. 
\end{lemma}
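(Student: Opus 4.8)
The plan is to read \eqref{polardec} off the disintegration of $\m$ along the distance function $d(x_0,\cdot)$, the only genuine point being that the base measure can be taken to be $\mathcal{L}^1$ itself. First I would push $\m$ forward to $\nu:=(d(x_0,\cdot))_{\#}\m$ on $(0,\infty)$ and disintegrate $\m=\int_{(0,\infty)}\hat{\m}_r\,d\nu(r)$, where each $\hat{\m}_r$ is a probability measure concentrated on the level set $\{d(x_0,\cdot)=r\}=\partial B(x_0,r)$. Once one knows $\nu\ll\mathcal{L}^1$, writing $\nu=h\,\mathcal{L}^1$ and setting $\m_r:=h(r)\hat{\m}_r$ produces finite measures on the spheres for which \eqref{polardec} holds.

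The heart of the matter is thus the absolute continuity $\nu\ll\mathcal{L}^1$, and this is where $MCP(K,N)$ is used. I would in fact show that $r\mapsto\m(B(x_0,r))$ is locally Lipschitz. Fix $0<r_1<r_2$ and let $A:=B(x_0,r_2)\setminus B(x_0,r_1)$. The $MCP$ contraction $\Phi_t$ of $A$ towards $x_0$ moves a point at distance $\rho$ from $x_0$ to one at distance $t\rho$, so $\Phi_t(A)$ lies in the annulus of radii $[tr_1,tr_2]$ and, by the $MCP$ distortion estimate, satisfies $\m(\Phi_t(A))\ge w(t)\,\m(A)$ (with $w(t)=t^N$ when $K=0$, and the model weight in general). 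Taking $t=r_1/r_2$ and iterating with the powers $t^k$ yields countably many images sitting in pairwise disjoint annuli inside $B(x_0,r_2)$; summing the resulting geometric series gives $\frac{1}{1-t^N}\m(A)\le\m(B(x_0,r_2))$, hence $\m(A)\le(1-t^N)\m(B(x_0,r_2))$, which is bounded by a local constant times $r_2-r_1$. This local Lipschitz bound gives $\nu\ll\mathcal{L}^1$. I expect this geometric-series contraction estimate to be the main obstacle, since it is precisely what rules out a radial singular part: Bishop--Gromov monotonicity alone would leave open a singular continuous contribution.

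For the final claim, write $\rho_0:=d(x_0,y)=d(x_0,z)$ (so $r<\rho_0$), and denote by $s$ the sphere radius, so that the assertion is that $\m_s(B(y,r))>0$ and $\m_s(B(z,r))>0$ for $s$ in a set $E$ of positive measure. Since $\spt\m=X$, the ball $B(y,\varepsilon)$ has positive measure for every $\varepsilon>0$. I would contract $\frac{1}{\m(B(y,\varepsilon))}\m\lvert_{B(y,\varepsilon)}$ towards $x_0$ via the $MCP$ plan $\nu_y$ and integrate over a short interval $t\in(1-\delta,1)$, obtaining $\eta_y:=\int_{1-\delta}^1(e_t)_{\#}\nu_y\,dt$. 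By the $MCP$ distortion bound $\eta_y\ll\m$; and for $\varepsilon,\delta$ small every point of its support lies within $\varepsilon+\delta\rho_0<r$ of $y$, so $\eta_y$ is concentrated in $B(y,r)$, while its radial marginal is spread over an interval of radii ending at $\rho_0$ from below. Since $\eta_y\ll\m$ and $\nu\ll\mathcal{L}^1$, this marginal is absolutely continuous; disintegrating $\eta_y=\int\eta_{y,s}\,d\mathcal{L}^1(s)$ with $\eta_{y,s}\ll\m_s$, positivity of its radial density (together with $\eta_y$ being concentrated in $B(y,r)$) forces $\m_s(B(y,r))>0$ for a.e.\ $s$ in that interval. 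Performing the same construction at $z$ gives a second interval of radii ending at $\rho_0$ from below on which $\m_s(B(z,r))>0$. As both intervals abut $\rho_0$ from below they overlap in a set $E$ of positive Lebesgue measure, on which both conclusions hold simultaneously. The one point requiring care is that the radial marginal of $\eta_y$ have positive density, not merely support, throughout the swept interval; this I would obtain from the change of variables $(t,\rho)\mapsto t\rho$ applied to the product of $dt$ with the nondegenerate radial marginal of $(e_1)_{\#}\nu_y$, whose pushforward is absolutely continuous with density positive on the connected swept interval.
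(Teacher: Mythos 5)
Your proof is correct, but it takes a genuinely different route from the paper's on both halves of the lemma. For the decomposition \eqref{polardec}, the paper does not reprove a Lipschitz estimate from scratch: it invokes Ohta's Bishop--Gromov theorem for $MCP(K,N)$-spaces and then observes (Lemma \ref{BGtopolar}) that monotonicity of $r\mapsto\m(B(x,r))/w(r)$ against a \emph{locally absolutely continuous} model weight $w$ already gives $\varpi([r_1,r_2])\le C_{r,R}\,(w(r_2)-w(r_1))$, hence $\varpi\ll\mathcal{L}^1$. In particular your side remark that ``Bishop--Gromov monotonicity alone would leave open a singular continuous contribution'' is mistaken --- absolute continuity of the comparison weight rules out singular parts, and this is exactly the mechanism of Lemma \ref{BGtopolar}; this misjudgement does not, however, affect your own argument. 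Your geometric-series contraction estimate is sound (modulo minor care: $MCP$ gives a plan rather than a literal map $\Phi_t$, the annuli should be taken half-open to be disjoint, and for $K\neq0$ the distortion coefficient must be bounded below locally), and it in effect reproves the contraction estimate underlying Ohta's theorem, making your first part self-contained where the paper cites. For the ``moreover'' statement, the paper follows Kitabeppu--Lakzian's Lemma 2.13, using only the qualitative non-degeneracy of $\m$ through the scaling zero--one statement of Lemma \ref{claim216}: the null set $\{r\in(l-r_0,l):\m_r(B(y,r_0))=0\}$ is essentially invariant under $r\mapsto tr$, whence the good set has full measure in the relevant interval. You instead build the explicit swept measure $\eta_y=\int_{1-\delta}^{1}(e_t)_{\#}\nu_y\,\mathrm{d}t$, dominate it by a constant multiple of $\m$ via a uniform lower bound on the distortion coefficients for $t$ near $1$, and compare radial densities; the positivity of the radial density on the swept interval is legitimately supplied by your change of variables $(t,\rho)\mapsto t\rho$ together with $\spt\m=X$ and a geodesic from $x_0$ to $y$ (which exists by Hopf--Rinow under the standing assumptions) providing points of every radius in $(\rho_0-\varepsilon,\rho_0)$ inside $B(y,\varepsilon)$. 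What each approach buys: yours is quantitative and direct, replacing both of the paper's auxiliary lemmas by explicit estimates; the paper's is softer --- it needs only the qualitative non-degeneracy that $MCP$ implies, and transfers the $RCD^*$ argument of Kitabeppu--Lakzian essentially verbatim. Both ultimately rest on the same $MCP$ contraction towards $x_0$.
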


The identity \eqref{polardec} of Lemma \ref{polar} follows directly from Lemma \ref{BGtopolar} and the Bishop-Gromov theorem in $MCP(K,N)$-spaces proven by Ohta in \cite{Ohta2}. The rest of the proof of Lemma \ref{polar} goes exactly as the proof of \cite[Lemma 2.13]{Kitabeppu-Lakzian}, after proving Lemma \ref{claim216} (see \cite[Claim 2.16]{Kitabeppu-Lakzian}).

\begin{lemma}\label{BGtopolar}
Let $(X,d,\m)$ be a metric measure space satisfying the following local Bishop--Gromov volume comparison property: for every $x_0\in X$ and $R>0$ there exists a locally absolutely continuous $w\coloneqq w_{x_0,R}\colon (0,R)\to (0,\infty)$ such that the map
\begin{align}\label{Bishop-Gromov}r\mapsto  \frac{\m(B(x,r))}{w(r)}, \quad B(x,r)\subset B(x_0,R)
\end{align}
is decreasing. Then we can write $\m$ as
\begin{align}\int_X f \d\m=\int_{(0,\infty)}\int_{\partial B(x_0,r)}f\d\m_r \d\mathcal{L}^1(r),
\end{align}
for every non-negative Borel function $f$.
\end{lemma}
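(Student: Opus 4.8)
The plan is to disintegrate $\m$ along the level sets of the distance function $\rho(\cdot):=d(x_0,\cdot)$ and to read off the one-dimensional density from the radial profile $V(r):=\m(B(x_0,r))$. First I would record the soft facts: $\rho$ is $1$-Lipschitz, hence Borel; each $V(r)$ is finite (balls are relatively compact by local compactness and completeness) and $V$ is non-decreasing; and $V(r)>0$ for $r>0$ since $\spt\m=X$. The level set $\rho^{-1}(r)$ is exactly the metric sphere $\partial B(x_0,r)$, and $V(r)=\rho_\#\m([0,r))$, so $V$ is the distribution function of the pushforward $\rho_\#\m$ on $[0,\infty)$. The whole statement will then follow once I know that $\rho_\#\m$ is absolutely continuous with respect to $\mathcal L^1$.

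The crucial step is therefore to show that $V$ is locally absolutely continuous on $(0,\infty)$. Fixing $R>0$ and the associated $w=w_{x_0,R}$ and applying \eqref{Bishop-Gromov} with $x=x_0$, the function $g:=V/w$ is non-increasing on $(0,R)$. On a compact subinterval $[\delta,R']\subset(0,R)$ one has $g\le g(\delta)=:C<\infty$, so for $\delta\le r_1\le r_2\le R'$,
\[
0\le V(r_2)-V(r_1)=w(r_2)g(r_2)-w(r_1)g(r_1)\le g(r_1)\bigl(w(r_2)-w(r_1)\bigr)\le C\bigl(w(r_2)-w(r_1)\bigr),
\]
where the middle inequality is $w(r_2)(g(r_2)-g(r_1))\le 0$ and the positivity of the left-hand side forces $w(r_2)\ge w(r_1)$. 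Thus the increments of the monotone function $V$ are dominated by those of the locally absolutely continuous $w$, and the absolute continuity of the integral $w(r_2)-w(r_1)=\int_{r_1}^{r_2}w'$ transfers to $V$; letting $\delta,R',R$ vary gives local absolute continuity of $V$ on $(0,\infty)$. The point to emphasise is that although $V=w\cdot(V/w)$ is a priori only a product of an absolutely continuous factor and a merely monotone (hence $BV$) factor, the monotonicity of $V$ together with the displayed bound rescues absolute continuity; this is precisely where the Bishop--Gromov comparison enters.

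Consequently $V$ is continuous, $\rho_\#\m$ has no atoms in $(0,\infty)$, and $\rho_\#\m|_{(0,\infty)}=V'\,\mathcal L^1$ with $V'\in L^1_{\mathrm{loc}}$. I would then apply the disintegration theorem to the Borel map $\rho\colon X\to[0,\infty)$ and the $\sigma$-finite measure $\m$, obtaining probability measures $\{\tilde\m_r\}$, each concentrated on $\rho^{-1}(r)=\partial B(x_0,r)$, with $\m=\int_{[0,\infty)}\tilde\m_r\,\d(\rho_\#\m)(r)$. Restricting to $(0,\infty)$ and absorbing the density into the fibre measures, i.e. setting $\m_r:=V'(r)\,\tilde\m_r$, yields
\[
\int_X f\d\m=\int_{(0,\infty)}\int_{\partial B(x_0,r)}f\d\m_r\,\d\mathcal L^1(r)
\]
for every non-negative Borel $f$, as claimed; the only contribution discarded is a possible atom of $\m$ at $x_0$ (the value $r=0$), which vanishes in the setting of interest. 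I expect the local-absolute-continuity step to be the main obstacle, the disintegration and the final rewriting being routine once $\rho_\#\m\ll\mathcal L^1$ is established.
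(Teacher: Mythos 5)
Your proof is correct and follows essentially the same route as the paper: both disintegrate $\m$ along the distance function $\rho=d(x_0,\cdot)$ and establish $\rho_\#\m\ll\mathcal L^1$ via exactly the same key estimate $V(r_2)-V(r_1)\le C\,(w(r_2)-w(r_1))$, obtained from the monotonicity of $V/w$ on compact subintervals, transferring the local absolute continuity of $w$ to $V$. The only cosmetic difference is bookkeeping (you put the density $V'$ into the fibre measures, the paper absorbs it into the pushforward $\varpi$), and your explicit flagging of the possible atom at $r=0$ is a point the paper passes over silently; in fact the hypothesis itself rules out atoms, since an atom would force a jump of $r\mapsto\m(B(x,r))$ for nearby centres $x$, incompatible with the continuity of $w$ and the monotonicity of the quotient.
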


\begin{proof}First of all, notice that by the Bishop-Gromov inequality \eqref{Bishop-Gromov} we have that the measure is boundedly finite (since it is locally finite by assumption). Thus, by the disintegration theorem we have that
\begin{align}\int_X f \d\m=\int_{(0,\infty)}\int_{\partial B(x_0,r)}f\d\m_r \d\varpi(r),
\end{align}
where $\varpi$ is the pushforward of $\m$ with respect to the function $x\mapsto d(x_0,x)$, and $\m_r$ is a finite Borel measure. We need to show that $\varpi$ is absolutely continuous with respect to Lebesgue measure. Observe first, that for $r<r_1<r_2<R$ we have that
\begin{align}\varpi([r_1,r_2])&= \m( B(x,r_2))-\m(B(x,r_1))
\\ &\le (w(r_2)-w(r_1))\frac{\m(B(x,r_1))}{w(r_1)}
\\ &\le C_{r,R}(w(r_2)-w(r_1)).
\end{align}
Thus, by the absolute continuity we have that for any $\varepsilon>0$ there exists $\delta>0$ so that 
\begin{align}\varpi\left(\bigsqcup_{i\in \N} [r_i,s_{i}]\right)\le C_{r,R} \sum_{i\in\N} (w_{s_i}-w_{r_i})<\varepsilon, 
\end{align}
whenever $\mathcal{L}^1\left(\bigsqcup_{i\in \N} [r_i,s_{i}]\right)<\delta$. Hence, by the definition of Lebesgue measure, we have that $\varpi \ll \mathcal{L}^1$.
\end{proof}

\begin{definition}[Non-degenerate measure \cite{Cavalletti-Mondino2016}] A measure $\m$ is called \emph{non-degenerate}, if for every Borel subset $A$ with $\m(A)>0$ we have that $\m(A_{t,x})>0$ for every $x\in X$ and $t\in[0,1)$, where $A_{t,x}\coloneqq \{\gamma_t: \gamma\in\geo(X), \gamma_0\in A, \gamma_1=x\}$. 
\end{definition}
\begin{lemma}\label{claim216}Let $(X,d,\m)$ be a metric measure space with $\m$ non-degenerate and satisfying the Bishop-Gromov inequality \eqref{Bishop-Gromov}. Let $x,y\in X$ with $l\coloneqq d(x,y)$, let $0<r_0<l$ and let $E\coloneqq \{r\in (l-r_0,l): \m_r(B(y,r_0))=0\}$. Then for every $t\in(0,1]$, we have that
\begin{align}\mathcal{L}^1\Big(\Big(\big(\frac1tE\big)\cap\big(l-r_0,l\big)\Big)\setminus E\Big)=0.
\end{align}
\end{lemma}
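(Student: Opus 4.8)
The plan is to argue by contradiction and to reduce the statement to the radial disintegration of $\m$ about the point $x$. The Bishop--Gromov hypothesis lets me apply Lemma~\ref{BGtopolar} with centre $x_0=x$, so that $\m$ disintegrates as in~\eqref{polardec} and the measures $\m_r$ on $\partial B(x,r)$ are precisely those in the definition of $E$. Setting $h(r)\coloneqq\m_r(B(y,r_0))$, I have $E=\{r\in(l-r_0,l):h(r)=0\}$, and the set whose Lebesgue measure must vanish is $B\coloneqq\{s\in(l-r_0,l):ts\in(l-r_0,l),\ h(ts)=0,\ h(s)>0\}$. I would suppose $\mathcal{L}^1(B)>0$ and contradict non-degeneracy.

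The engine is non-degeneracy applied to the contraction towards $x$. I would introduce the set $A\coloneqq\{z\in X:d(x,z)\in B,\ d(y,z)<r_0\}$; by~\eqref{polardec} its mass is $\m(A)=\int_B h(s)\,\d\mathcal{L}^1(s)>0$, because $h>0$ on $B$. Non-degeneracy then yields $\m(A_{1-t,x})>0$, and every $w\in A_{1-t,x}$ lies on a geodesic from some $z\in A$ to $x$ with $d(x,w)=t\,d(x,z)$, so $d(x,w)\in tB$; in other words $A_{1-t,x}$ is carried by the inner spheres $\partial B(x,r')$ with $r'\in tB$. On those spheres the $B(y,r_0)$-mass is $\int_{tB}h(r')\,\d\mathcal{L}^1(r')=0$, since $h(ts)=0$ for $s\in B$. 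Hence it suffices to show that a positive portion of the contracted mass actually falls inside $B(y,r_0)$, which would be the desired contradiction.

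The crux --- and the step I expect to be the main obstacle --- is exactly this last containment: contraction towards $x$ can push a point out of $B(y,r_0)$, since for $z\in A$ with contraction $w$ one only controls $d(y,w)\le d(y,z)+(1-t)\,d(x,z)<r_0+(1-t)l$, an expulsion of size up to $(1-t)l$. To tame this I would first feed in only mass that is strictly interior: using $B(y,r_0)=\bigcup_nB(y,r_0-\tfrac1n)$ and continuity of each $\m_s$ from below, $h(s)=\lim_n\m_s(B(y,r_0-\tfrac1n))$, so after passing to a positive-measure subset I may assume $\m_s(B(y,r_0-\tfrac1{n_0}))>0$ for a fixed $n_0$; for $t$ so close to $1$ that $(1-t)l<\tfrac1{n_0}$ the contraction of this interior mass does remain in $B(y,r_0)$ and the previous paragraph closes. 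To reach all $t\in(0,1]$ I would then bootstrap from this near-$1$ case, writing $t=\tau^m$ with $\tau$ near $1$ and iterating: the dilations $r\mapsto\tau r$ preserve $\mathcal{L}^1$-null sets, and for $s\in B$ the intermediate radii $\tau^ks$ stay in $(l-r_0,l)$ because they lie between $ts$ and $s$. Reconciling the $(1-t)l$ loss with the \emph{exact} radius $r_0$ is the delicate point; here the range constraint $ts>l-r_0$, equivalently $l-ts<r_0$, is what I expect to make the interior approximation compatible with the ball of radius exactly $r_0$.
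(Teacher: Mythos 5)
Your first two paragraphs reproduce, almost verbatim, the paper's actual proof of Lemma \ref{claim216}: the paper supposes the contrary, sets $F\coloneqq((\frac1t E)\cap(l-r_0,l))\setminus E$ (your $B$) and $B_F\coloneqq\{z\in B(y,r_0):d(x,z)\in F\}$ (your $A$), computes $\m(B_F)=\int_F\m_r(B(y,r_0))\d\mathcal{L}^1(r)>0$ from the disintegration \eqref{polardec}, invokes non-degeneracy to get that the contraction of $B_F$ towards $x$ has positive $\m$-measure, concludes $\mathcal{L}^1(tF\setminus E)>0$, and contradicts $tF\subset E$. On one piece of bookkeeping you are in fact more careful than the paper: with the definition $A_{t,x}=\{\gamma_t:\gamma\in\geo(X),\,\gamma_0\in A,\,\gamma_1=x\}$, the point $\gamma_t$ sits at radius $(1-t)\,d(x,\gamma_0)$, so landing on the radii $tF$ requires the contraction parameter $1-t$, as you use; the paper writes $(B_F)_{t,x}$.

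Where you diverge is your third paragraph, and here the comparison is instructive: the paper contains none of it. It passes from $\m((B_F)_{t,x})>0$ to $\mathcal{L}^1(tF\setminus E)>0$ in a single \enquote{Therefore}, and that implication needs precisely the containment you isolate --- membership of $r$ in $E$ only constrains $\m_r$ \emph{on} $B(y,r_0)$, so one must know that a positive part of the contracted mass still lies in $B(y,r_0)$, whereas the triangle inequality only gives $d(y,w)\le d(y,z)+(1-t)d(x,z)<r_0+(1-t)l$ (the lemma's range constraints do force $(1-t)s<r_0$, but that still allows $d(y,w)$ up to $2r_0$, and in, say, the Euclidean plane the contracted point genuinely can leave the ball). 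So you have correctly identified a step which the paper, following Claim 2.16 of Kitabeppu--Lakzian, records without justification. Your proposed repair, however, does not close as sketched, for the reason you yourself flag: the admissible range $(1-t)l<\frac1{n_0}$ depends on $n_0$, which is extracted from the very positive-measure set you are trying to kill, so the near-$1$ base case holds on no interval $(\tau_0,1)$ uniform in the data, and the iteration $t=\tau^m$ never launches. (Your set-theoretic observation is sound: with $N_t$ the set in question one has $N_{\tau\sigma}\subset N_\tau\cup\frac1\tau N_\sigma$ and dilations preserve $\mathcal{L}^1$-null sets, so a uniform base interval \emph{would} bootstrap; but note also that factoring the contraction into small steps cannot reduce the total displacement, which is exactly $(1-t)s$ however $t$ is factored, so the interior margin cannot be recovered stepwise either.) In short: up to the ball-containment step your argument coincides with the paper's; beyond that point the paper asserts what you honestly attempt, and fail, to prove, so your writeup is incomplete exactly where the paper's own proof is silent.
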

\begin{proof} Suppose the contrary. Then there exists $t$ so that 
\[\mathcal{L}^1(F)>0,\]
where $F\coloneqq((\frac1tE)\cap(l-r_0,l))\setminus E$. Define $B_F\coloneqq \{z\in B(y,r_0): d(x,z)\in F\}$. Then we have that
\[\m(B_F)=\int_F\m_r(B(y,r_0))\d\mathcal{L}^1(r)>0.\]

By the non-degeneracy assumption we have that $\m((B_F)_{t,x})>0$. Therefore, we have that 
\[\mathcal{L}^1(tF\setminus E)>0.\]
On the other hand, by the definition of the set $F$, we have that $tF\subset E$, which is a contradiction.
\end{proof}

With Lemma \ref{polar} at our disposal, we are ready to prove Theorem \ref{MCPmild1d}.
\begin{proof}[Proof of Theorem \ref{MCPmild1d}]
Suppose that the claim is not true. As in the proof of Theorem \ref{mild1d}, we find a pair of branching geodesics $\alpha$ and $\beta$ with equal length, which start from $x$ and end up to two distinct points. Let $r_0$ be such that $B(\alpha(1),r_0)\cap B(\beta(1),r_0)=\varnothing$. Let $E$ be the set given by Lemma \ref{polar}, and $A\coloneqq \{z\in B(\alpha(1),r_0)\cup B(\beta(1),r_0): d(x,z)\in E\}$. Let $\varepsilon>0$ be such that $B(x,\varepsilon)$ is isometric to an interval. Define $\mu_0\coloneqq \frac1{\m(\gamma([0,\varepsilon)))}\m\lvert_{\gamma([0,\varepsilon))}$ and $\mu_1\coloneqq \frac1{\m(A)}\m\lvert_A$. Now, due to the definition of the set $E$, we conclude exactly with the same arguments as in the proof of Theorem \ref{mild1d} that none of the optimal plans from $\mu_0$ to $\mu_1$ is given by a map, which is a contradiction with the assumption.
\end{proof}

In Theorem \ref{mild1d} and Theorem \ref{MCPmild1d} we gained a global one-dimensionality from a local one-dimensionality near a single point in the space. In the next theorem we will weaken the assumption on the local one-dimensionality to assumption on the infinitesimal one-dimensionality at a single point. However, at the same time we need to assume not only the existence of optimal maps but also the uniqueness of the plan. Notice that the existence of optimal maps is quite often proven in such a way that the uniqueness is achieved at the same time.
\begin{theorem}\label{micro1dgen}Let $(X,d,\m)$ be a locally metrically doubling metric measure space with the following properties:
\begin{enumerate}
\item For every $\mu_0\in\P_2^{ac}(X)$ and $\mu_1\in\P_2(X)$, there exists a unique $\pi\in\og(\mu_0,\mu_1)$ and it is induced by a map from $\mu_0$.
\item There exists a point $x\in X$ such that $\mathrm{Tan}(X,x)=\{(\R,0)\}$.
\end{enumerate}
Then $X$ is a one-dimensional manifold, possibly with boundary.
\end{theorem}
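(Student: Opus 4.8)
The plan is to reduce Theorem \ref{micro1dgen} to Theorem \ref{mild1d}. Observe first that hypothesis (1) of Theorem \ref{micro1dgen} is strictly stronger than hypothesis \eqref{exmaps} of Theorem \ref{mild1d}: existence (and uniqueness) of the optimal geodesic plan from any $\mu_0\in\P_2^{ac}(X)$ to any $\mu_1\in\P_2(X)$ in particular provides an optimal plan induced by a map. Hence it suffices to verify hypothesis \eqref{liibalaiba} of Theorem \ref{mild1d} at the point $x$, namely that $x$ has an open neighbourhood isometric to an open interval; then Theorem \ref{mild1d} immediately yields that $X$ is a one-dimensional manifold, possibly with boundary. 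Since $\Tan(X,x)=\{(\R,0)\}$ is the full line rather than a half-line, I expect $x$ to be an interior point, so that the neighbourhood is an honest open interval $(-\varepsilon,\varepsilon)$.

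First I would manufacture a local geodesic line through $x$ out of the tangent. Because the rescalings $(X,d/r,x)$ converge to $(\R,\lvert\cdot\rvert,0)$, for every small $r$ there are ``poles'' $p_r,q_r$ realising the two ends $\pm1$ of the limiting interval, i.e.\ $d(x,p_r),d(x,q_r)$ are comparable to $r$ while $d(p_r,q_r)$ differs from $d(x,p_r)+d(x,q_r)$ by only $o(r)$. Using local compactness (guaranteed, together with existence of tangents, by the local doubling hypothesis) and Arzel\`a--Ascoli, I would pass to a limit of the geodesics $[p_r,q_r]$ to obtain a locally minimising, unit-speed curve $\gamma\colon(-\delta,\delta)\to X$ with $\gamma_0=x$; after shrinking $\delta$, local minimality makes $\gamma$ an isometry onto its image, an honest interval.

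The heart of the matter, and the step where uniqueness of optimal maps enters, is to show that for some $\varepsilon\le\delta$ every point of $B(x,\varepsilon)$ lies on $\im\gamma$, so that $B(x,\varepsilon)$ coincides with $\gamma((-\varepsilon,\varepsilon))$. I would argue by contradiction: suppose there are points $z\in B(x,\varepsilon)\setminus\im\gamma$ for arbitrarily small $\varepsilon$, say with $z$ on the positive side. Using $\Tan(X,x)=\{(\R,0)\}$ I would first show that any geodesic from a backward point $\gamma_{-a}$ to $z$ must run along $\gamma$ before branching off towards $z$ --- this is where I would invoke uniqueness of the optimal plan to preclude two distinct geodesic directions near $x$ (equivalently, to rule out transverse spreading of geodesics issuing from $x$ on a fixed side). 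This produces a genuine pair of branching geodesics sharing an initial segment of $\gamma$: the continuation of $\gamma$ towards $\gamma_b$, and the branch towards $z$. With $\spt\m=X$ supplying positive mass, I would then take $\mu_0$ to be the normalised restriction of $\m$ to a small ball about a backward point of $\gamma$, and $\mu_1=\tfrac12\delta_{\gamma_b}+\tfrac12\delta_z$ (admissible since maps exist for \emph{all} $\mu_1\in\P_2(X)$), and run verbatim the $c$-cyclical monotonicity computation from the proof of Theorem \ref{mild1d}, whose strict inequality shows that no optimal plan can be induced by a map --- contradicting hypothesis (1).

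The main obstacle I expect is precisely the promotion of the \emph{approximate} one-dimensionality supplied by the tangent into the \emph{exact} statements needed to run the $c$-cyclical monotonicity argument: the tangent only tells us that geodesics to nearby points stay close to $\gamma$ and that the poles are almost antipodal, whereas the branching contradiction requires genuine coincidence of an initial segment and a genuine common branch point. Converting ``approximately branching'' into ``branching'' is where I would lean most heavily on uniqueness of optimal maps --- expecting it to force non-branching, hence radial ordering of same-side points and antipodality of opposite-side points --- together with the fact that the hypothesis is $\Tan(X,x)=\{(\R,0)\}$ exactly, not merely up to small error. Once the interval neighbourhood of $x$ is secured, the passage to the global conclusion is immediate from Theorem \ref{mild1d}.
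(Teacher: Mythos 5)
Your reduction to Theorem \ref{mild1d} matches the paper's first step, but the core of your plan has a genuine gap: the step where ``uniqueness of the optimal plan precludes two distinct geodesic directions near $x$'' and thereby upgrades approximate branching to exact branching is not an argument, and the contradiction you then aim for cannot exist. You propose to produce two geodesics sharing an initial segment of $\gamma$ and to contradict the mere \emph{existence} of a map-induced plan from $\mu_0$ (a normalised restriction of $\m$ to a small ball) to $\mu_1=\tfrac12\delta_{\gamma_b}+\tfrac12\delta_z$ via the strict $c$-cyclical monotonicity inequality from the proof of Theorem \ref{mild1d}. But that strict inequality relies on $\mu_0$ being spread along a set \emph{isometric to an interval} lying on the common initial segment --- exactly the structure Theorem \ref{micro1dgen} is trying to establish, so it is unavailable; a ball about a backward point is not known to be one-dimensional, and the collinearity identities the computation needs fail. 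Worse, if every transport geodesic passes through $x$ and the two targets are equidistant from $x$, then every admissible plan has the same cost, so \emph{all} plans are optimal and map-induced optimal plans trivially exist: no contradiction with existence is possible. The paper's own closing example (the two-sided cusp in $(\R^2,\lvert\cdot\rvert_{\sup})$, where $\Tan(X,0)=\{(\R,0)\}$, genuine branching occurs through the pinch point, and optimal maps exist but are not unique) shows that your intended contradiction is false as stated. Your preliminary construction of the segment $\gamma$ is also unjustified: the geodesics $[p_r,q_r]$ have length comparable to $2r\to0$, so their Arzel\`a--Ascoli limit degenerates to the point $x$, while at any fixed scale a geodesic between the poles need not pass through $x$ at all.

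The paper's actual mechanism uses uniqueness \emph{contrapositively} and at two scales, neither of which appears in your outline. Assuming $x$ has no interval neighbourhood, it constructs two equal-length geodesics $\alpha^n,\beta^n$ from $x$ into a ball $B(y_n,\delta_n/2)$ on one side, takes $\mu_0^n$ the normalised restriction of $\m$ to $B(z_n,\delta_n/2)$ on the \emph{opposite} side, and $\mu_1^n=\tfrac12[\delta_{\alpha^n_1}+\delta_{\beta^n_1}]$. If $\pi_n$-a.e.\ transport geodesic passed through $x$, then $\mu_0^n$-a.e.\ point would be equidistant from the two targets, so the product plan $\mu_0^n\otimes\mu_1^n$ would be concentrated on a $c$-cyclically monotone set and hence optimal --- contradicting \emph{uniqueness}, since the unique plan is map-induced while the product plan is not. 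This yields a transport geodesic $\eta^n$ crossing from the $z_n$-side to the $y_n$-side while \emph{missing} $x$ at distance $r_n\coloneqq d(\im\eta^n,x)$, and the blow-up at scale $\delta_n$ forces $r_n/\delta_n\to0$. The decisive second blow-up, at the miss scale $r_n$, then finishes: choosing $s_n^1<s_n<s_n^2$ with $d(\eta^n_{s^i_n},\eta^n_{s_n})=r_n$ and $d(\eta^n_{s^1_n},\eta^n_{s^2_n})=2r_n$, the rescaled configuration converges (using local doubling and $\Tan(X,x)=\{(\R,0)\}$) to a segment $[0,2]\subset\R$ with $\eta^n_{s_n}\to1$, forcing one endpoint $\eta^n_{s^i_n}$ to converge to $0$, the image of $x$, contradicting $d(\eta^n_{s^i_n},x)\ge r_n$. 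In short: uniqueness is used to manufacture a geodesic \emph{avoiding} $x$, and the tangent hypothesis kills that geodesic at a finer scale; without these two moves your passage from approximate to exact one-dimensionality does not close.
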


In particular, we get the following stronger version of Corollary $\ref{cor1d}$ in the case of essentially non-branching spaces.
\begin{corollary}\label{microcor}
Let $(X,d,\m)$ be an essentially non-branching $MCP(K,N)$-space (or ess. nb., qualitatively non-degenerate space). Suppose that there exists a point $x\in X$ for which $\mathrm{Tan}(X,x)=\{(\R,0)\}$. Then $X$ is a one-dimensional manifold, possibly with boundary.
\end{corollary}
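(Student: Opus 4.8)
The plan is to deduce Corollary \ref{microcor} directly from Theorem \ref{micro1dgen}, by checking that an essentially non-branching $MCP(K,N)$-space (respectively, an essentially non-branching qualitatively non-degenerate space) satisfies both hypotheses of that theorem. The second hypothesis is immediate: the assumption $\Tan(X,x)=\{(\R,0)\}$ at the given point $x$ is exactly the infinitesimal one-dimensionality demanded in item (2) of the theorem. Hence the entire task reduces to verifying that such a space (i) is locally metrically doubling, and (ii) admits, for every $\mu_0\in\P_2^{ac}(X)$ and every $\mu_1\in\P_2(X)$, a unique $\pi\in\og(\mu_0,\mu_1)$ induced by a map from $\mu_0$.

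For (ii) I would invoke the existing regularity theory for optimal maps. In the essentially non-branching $MCP(K,N)$ case this is the theorem of Cavalletti and Mondino \cite{Cavalletti-Mondino}, and in the more general essentially non-branching qualitatively non-degenerate case it is Kell's theorem \cite{Kell}; both assert that for $\mu_0\ll\m$ and $\mu_1\in\P_2(X)$ \emph{arbitrary} the static optimal plan in $\opt{\mu_0}{\mu_1}$ is unique and induced by a map. To reach the dynamical formulation needed in item (1) of the theorem, one lifts this plan to a geodesic plan: the essential non-branching assumption forces the geodesic joining $\mu_0$-almost every pair to be a.e.\ unique, so the lift $\pi\in\og(\mu_0,\mu_1)$ is itself unique and induced by a map $X\to\geo(X)$. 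This passage between the static and dynamical levels is of the same nature as the one recorded in Proposition \ref{ylim}.

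For (i) I would use the Bishop--Gromov volume comparison. In an $MCP(K,N)$-space the comparison proven by Ohta \cite{Ohta2} --- the very same one underlying Lemma \ref{polar} --- yields a local measure-doubling estimate $\m(B(x,2r))\le C\,\m(B(x,r))$ for balls of bounded radius; since $\spt\m=X$, local measure doubling upgrades in the standard way to local metric doubling. I expect this to be the step requiring the most care, and it is the genuine obstacle in the qualitatively non-degenerate regime: there the curvature-type comparison is weaker, so one must confirm that the available non-degeneracy of $\m$ still forces the underlying metric to be locally doubling (or else restrict to the regime in which a volume bound of the above form is at hand). Once (i) and (ii) are established, Theorem \ref{micro1dgen} applies verbatim and gives that $X$ is a one-dimensional manifold, possibly with boundary, which is the assertion of the corollary.
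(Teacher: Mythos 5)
Your proposal is correct and is essentially the paper's own (implicit) route: the corollary is presented there as an immediate consequence of Theorem \ref{micro1dgen}, with hypothesis (1) supplied directly by the theorems of Cavalletti--Mondino \cite{Cavalletti-Mondino} and Kell \cite{Kell}, which are already stated at the dynamical level (uniqueness of $\pi\in\og(\mu_0,\mu_1)$, induced by a map, for $\mu_0\ll\m$ and \emph{arbitrary} $\mu_1\in\P_2(X)$), so your static-to-dynamical lifting step is not actually needed. The one point you flag as an obstacle, local metric doubling in the qualitatively non-degenerate case, in fact closes in one line: applying non-degeneracy with $A=B(y,2r)$, target point $y$ and $t=\tfrac12$ gives $A_{1/2,y}\subset B(y,r)$, hence $\m(B(y,2r))\le f(1/2)^{-1}\,\m(B(y,r))$, so $\m$ is locally doubling and therefore $(X,d)$ is locally metrically doubling by the standard covering argument (using $\spt\m=X$); the $MCP(K,N)$ case is the special instance with $f$ given by the comparison coefficients, i.e.\ Ohta's Bishop--Gromov inequality, as you say.
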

It is worth mentioning that the case of branching very strict $CD(K,N)$ -spaces  is more difficult due to the non-uniqueness of the optimal transport map, and that it is not clear how to overcome this issue to obtain the same result in that case.

\begin{proof}[Proof of Theorem \ref{micro1dgen}]
Let $x\in X$ be such that $\Tan(X,x)=\{(\R,0)\}$. By Theorem \ref{mild1d} it suffices to prove that $x$ has a neighbourhood isometric to an open interval. 
Let $\varepsilon_n\searrow 0$. Then by the fact that $\R$ is a tangent of $X$ at $x$, we find $\tilde r_n\searrow 0$ and points $y_n,z_n\in X$ with 
$d(y_n,z_n)\ge2\tilde r_n-\tilde r_n\varepsilon_n$, and $\tilde r_n+\tilde r_n\varepsilon_n\ge d(y_n,x),d(z_n,x)\ge \tilde r_n-\tilde r_n\varepsilon_n$. Let $\delta_n\coloneqq d(y_n,x)$. We may assume that $B(y_n,\frac{\delta_n}2)$ is not isometric to an open interval (otherwise we are done by Theorem \ref{mild1d}).

Suppose that $x$ has no neighbourhood isometric to an open interval. Hence, by the following argument, there exist two geodesics $\alpha^n$ and $\beta^n$ from $x$ to $B(y_n,\frac{\delta_n}2)$ with $l(\alpha^n)=l(\beta^n)$ and $\alpha^n_1\neq\beta^n_1$. Let $y'_n\in B(y_n,\delta_n)$ be a point lying in the interior of a geodesic $\gamma$ connecting $x$ to $y_n$. Let $B$ be a ball centered at $y'_n$ so that $B\subset B(y_n,\frac{\delta_n}2)\setminus\{y_n\}$. The ball $B$ is not isometric to an interval (again by Theorem \ref{mild1d}). Thus, it contains a point $y''_n$ which does not lie in the geodesic $\gamma$. Then, by triangle inequality, there exists a point $w_n$ lying in $\gamma$ with $d(x,y_n'')=d(x,w_n)$. The geodesics $\alpha^n$ and $\beta^n$ connecting $x$ to $y''_n$ and $w_n$, respectively, satisfy the desired conditions.

Define measures $\mu^n_0\coloneqq \frac1{\m(B(z_n,\frac{\delta_n}2))}\m\lvert_{B(z_n,\frac{\delta_n}2)}$ and $\mu^n_1\coloneqq \frac12[\delta_{\alpha^n_1}+\delta_{\beta^n_1}]$. Let $\pi_n\in\og(\mu^n_0,\mu_1^n)$ be the unique optimal plan. Then there exists a $\pi_n$- positive measure set $\mathcal A_n$ such that any $\eta\in\mathcal A_n$ does not go through $x$, since otherwise the uniqueness of the plan could be easily violated by simply sending any point to both of the points $\alpha^n_1$ and $\beta^n_1$. More precisely, suppose that $\pi_n$- almost every $\gamma$ goes through $x$. This implies that for $\mu_0^n$- almost every $\tilde x$ we have that 
\begin{align}\label{TAMANAIN}d(\tilde x,\alpha^n_1)=d(\tilde x,x)+d(x,\alpha^n_1)=d(\tilde x,x)+d(x,\beta^n_1)= d(\tilde x,\beta^n_1).\end{align}
Let $M\subset X$ be the set of full $\mu_0$-measure whose points satisfy $\eqref{TAMANAIN}$. Then $M\times \{\alpha^n_1,\beta^n_1\}$ is $c$-cyclically monotone set. Therefore, the measure $\mu_0^n\tens \mu_1^n$ is concentrated on a $c$-cyclically monotone set, and thus $\mu_0^n\tens \mu_1^n\in\opt{\mu^n_0}{\mu^n_1}$. Clearly, $\mu_0^n\tens \mu_1^n$ is not induced by a map, but $(e_0,e_1)_\#\pi_n$ is, which contradicts the uniqueness of the optimal plan.

Let $\eta^n\in \mathcal A_n$ be a geodesic not going via $x$. Consider the sequence $(X,d_{\delta_n},x)$, $d_{\delta_n}\coloneqq \frac{d}{\delta_n}$, which converges (up to subsequence) to $\R$. Since $d(\eta^n_t,x)\le 8\delta_n$ for all $t\in[0,1]$, we know that $\eta^n$ converges (again, up to subsequence) to a geodesic $\eta^\infty\in\geo(\R)$. Morever, since $d(y_n,\eta^n_0), d(z_n,\eta^n_1)< \frac12 \delta_n $, and since $z_n\longrightarrow -1$ and $y_n\longrightarrow1$ (or vice versa), we have that there exists a sequence $(s_n)$ so that  $\eta^n_{s_n}\longrightarrow 0$. In particular, if we take $s_n$ so that $d(\eta^n_{s_n},x)=d(\im\eta^n,x)$, we have that $\eta^n_{s_n}\longrightarrow 0$ implying that $\frac{d(\eta^n_{s_n},x)}{\delta_n}\longrightarrow 0$. Thus, we find $s^1_n<s^2_n$ such that $d(\eta^n_{s^1_n},\eta^n_{s_n})=d(x,\eta^n_{s_n})=d(\eta^n_{s^2_n},\eta^n_{s_n})\eqqcolon r_n$ and $d(\eta^n_{s^2_n},\eta^n_{s^1_n})=2d(x,\eta^n_{s_n})$ (see Figure \ref{RR}).

\begin{figure}[h] 
\begin{center}

\input{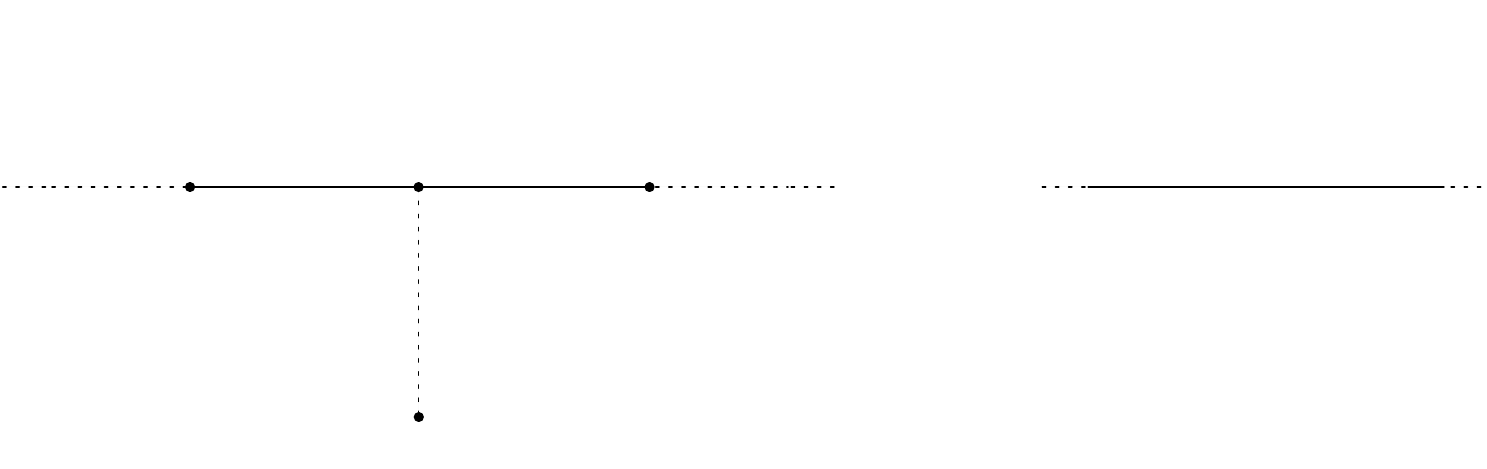_t}
\caption{Tripod type configuration leading to a contradiction at the blow-up.}
\label{RR}
\end{center}
\end{figure}

Finally, consider a sequence $(X,d_{r_n},x)$, $d_{r_n}\coloneqq \frac{d}{r_n}$, converging by the local doubling property (up to subsequence) to $\R$. Then we have again that $\eta^n$ (sub)converges to a limit geodesic in $\R$. Now by the choice of $r_n$ we have that $\eta^n_{s_n}$ converges either to $1$ or to $-1$. We may assume without loss of generality that it converges to $1$. Then, by the choice of $s^1_n$ and $s^2_n$, we have that $\restr_{s^1_n}^{s_n^2}(\eta^n)$ converges to an interval $[0,2]$, implying that $\eta^n_{s^i_n}$ converges to $0$ for either $i\in\{1,2\}$. This contradicts the fact that $d(\eta^n_{s_n^i},x)\ge d(\eta^n_{s_n},x)=r_n$ for both $i\in\{1,2\}$.

\end{proof}
The following example shows that the uniqueness of the optimal map is crucial in Theorem \ref{micro1dgen}.
\begin{example}[Necessity of the uniqueness of the map in Theorem \ref{micro1dgen}]
Let $X\subset \R^2$ be a weakly (geodesically) convex, two-sided cusp on the plane endowed with supremum norm, for example $A\coloneqq \{(x,y)\in\R^2: \lvert x\rvert \le\frac12,\lvert y\rvert\le x^2\}$. Let $d$ be the distance induced by the supremum norm restricted to $X$, and $\m$ a locally finite measure absolutely continuous with respect to $\mathcal{L}^2$. Then $(X,d,\m)$ is a geodesic (since $\lvert (x^2)'\rvert\le1$) metric measure space satisfying 
\begin{enumerate}
\item For every $\mu_0\in\P_2^{ac}(X)$ and $\mu_1\in\P_2(X)$, there exists $\pi\in\og(\mu_0,\mu_1)$ that is induced by a map from $\mu_0$\footnote{The existence of such $\pi$ is due to the existence of optimal maps on $(\R^2,\lvert\cdot \rvert_{\sup})$, see for example \cite{Carlier-DePascale-Santambrogio}, and the weak convexity of $X$. },
\item$\mathrm{Tan}(X,0)=\{(\R,0)\}$,
\end{enumerate}
but does not satisfy the conclusion of Theorem \ref{micro1dgen}.
\end{example}

\bibliographystyle{amsplain}
\bibliography{One_dim_ref}

\end{document}